\theoremstyle{plain}
\newtheorem{theorem}{Theorem}
\newtheorem{proposition}[theorem]{Proposition}
\newtheorem{lemma}[theorem]{Lemma}
\theoremstyle{definition}
\newtheorem{definition}{Definition}
\newtheorem*{theorem*}{Theorem}
\newtheorem*{proposition*}{Proposition}
\newtheorem*{lemma*}{Lemma}
\theoremstyle{remark}
\newtheorem{remark}[definition]{Remark}
\numberwithin{equation}{section}
\newcommand{\ZZ}{\mathbb Z}
\newcommand{\on}{\operatorname}
\newcommand{\ol}{\overline}
\newcommand{\wh}{\widehat}
\newcommand*{\defeq}{\mathrel{\rlap{%
                     \raisebox{0.3ex}{$\m@th\cdot$}}%
                     \raisebox{-0.3ex}{$\m@th\cdot$}}%
                     =}
\newcommand{\id}{\on{id}}
\newcommand\ab{\on{ab}}
\DeclareMathOperator\Sp{Sp}
\newcommand\nc{\newcommand}
\nc\renc{\renewcommand}
\newcommand\bz{{\mathbb Z}}
\newcommand{\customlabel}[2]{%
   \protected@write \@auxout {}{\string \newlabel {#1}{{#2}{\thepage}{#2}{#1}{}} }%
   \hypertarget{#1}{#2}
}
\DeclareMathOperator\matone{\Phi}
\DeclareMathOperator\mattwo{\Psi}
\newcommand{\mf}{\mathfrak}
\providecommand\@dotsep{5}
\def\listtodoname{List of Todos}
\def\listoftodos{\@starttoc{tdo}\listtodoname}
\newcommand{\zh}{\wh{\bz}}
\DeclareMathOperator\SL{SL}
\title[Lifting Subgroups of Symplectic Groups over $\mathbb{Z}/\ell\mathbb{Z}$]{Lifting Subgroups of Symplectic Groups over $\mathbb{Z}/\ell\mathbb{Z}$}
\date{\today}
\author[Aaron Landesman]{Aaron Landesman}
\author[Ashvin A. Swaminathan]{Ashvin A. Swaminathan}
\author[James Tao]{James Tao}
\author[Yujie Xu]{Yujie Xu}
\begin{document}

\begin{abstract}
For a positive integer $g$, let $\mathrm{Sp}_{2g}(R)$ denote the group of $2g \times 2g$ symplectic matrices over a ring $R$. Assume $g \ge 2$. For a prime number $\ell$, we show that any closed subgroup of $\mathrm{Sp}_{2g}(\mathbb{Z}_\ell)$ that surjects onto $\mathrm{Sp}_{2g}(\mathbb{Z}/\ell\mathbb{Z})$ must in fact equal all of $\mathrm{Sp}_{2g}(\mathbb{Z}_\ell)$. Our result is motivated by group theoretic considerations that arise in the study of Galois representations associated to abelian varieties.
\end{abstract}
\maketitle

\vspace*{-0.1in}
\section{Introduction}

Let $g$ be a positive integer, and for a ring $R$, denote by $\mathrm{Sp}_{2g}(R)$ the group of $2g \times 2g$ symplectic matrices over $R$. Let $\ZZ_\ell$ denote the ring of $\ell$-adic integers, and consider the natural projection map $\Sp_{2g}(\ZZ_\ell) \to \Sp_{2g}(\ZZ/\ell \ZZ)$. 
In this paper, we show that when $g > 1$, there are no proper closed subgroups of $\Sp_{2g}(\ZZ_\ell)$ that surject via this projection map onto all of $\Sp_{2g}(\ZZ/\ell \ZZ)$.

The case $g = 1$, in which $\Sp_2 = \SL_2$, is well-understood. Indeed,
as proven in~\cite[Lemma 3, Section IV.3.4]{serre1989abelian},
if $\ell \geq 5$ and $H_\ell \subset \SL_2(\ZZ_\ell)$ is a closed subgroup that surjects onto $\SL_2(\ZZ/\ell \ZZ)$, then $H_\ell = \SL_2(\ZZ_\ell)$. The corresponding result for $\ell \in \{2,3\}$ simply does not hold: in each of these cases, there are nontrivial subgroups of $\Sp_{2g}(\ZZ / \ell^2 \ZZ)$ that surject onto $\SL_2(\ZZ/\ell \ZZ)$. See~\cite[Section IV.3.4, Exercises 1 -- 3]{serre1989abelian} for exercises outlining a proof, and also see for more comprehensive descriptions~\cite{dontyouknowit} for the case $\ell = 2$ and~\cite{onething} for the case $\ell = 3$.

The objective of the present article is to generalize~\cite[Lemma 3, Section IV.3.4]{serre1989abelian},
 to hold for all $g \geq 2$. Our main theorem is \mbox{stated as follows:}
\begin{theorem}\label{jamesdreamcometrue}
Let $g \ge 2$, let $\ell$ be a prime number, and let $H_\ell \subset \Sp_{2g}(\bz_\ell)$ be a closed subgroup. If the mod-$\ell$ reduction of $H_\ell$ equals all of $\Sp_{2g}(\bz / \ell \ZZ)$, then $H_\ell = \Sp_{2g}(\bz_\ell)$, and \mbox{in particular, the} mod-$\ell^k$ reduction of $H_\ell$ equals all of $\Sp_{2g}(\ZZ/\ell^k \ZZ)$ for each \mbox{positive integer $k$.} 
\end{theorem}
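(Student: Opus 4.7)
The plan is to proceed by induction on $k$, showing that the mod-$\ell^k$ image of $H_\ell$ equals $\Sp_{2g}(\bz/\ell^k\bz)$ for every $k \geq 1$; this suffices because $H_\ell$ is closed in $\Sp_{2g}(\bz_\ell)$, which is the inverse limit of the $\Sp_{2g}(\bz/\ell^k\bz)$. The base case $k = 1$ is the hypothesis, so the content of the induction is the following: if $H_\ell \bmod \ell^k$ equals $\Sp_{2g}(\bz/\ell^k\bz)$, then $H_\ell \bmod \ell^{k+1}$ equals $\Sp_{2g}(\bz/\ell^{k+1}\bz)$.

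For the inductive step, set $K_k := \ker(\Sp_{2g}(\bz/\ell^{k+1}\bz) \to \Sp_{2g}(\bz/\ell^k\bz))$. The map $I + \ell^k X \mapsto X \bmod \ell$ furnishes an $\Sp_{2g}(\FF_\ell)$-equivariant isomorphism of abelian groups $K_k \cong V := \mathfrak{sp}_{2g}(\FF_\ell)$, where the right-hand side carries the adjoint action. Let $H_{k+1}$ denote the image of $H_\ell$ in $\Sp_{2g}(\bz/\ell^{k+1}\bz)$ and put $N := H_{k+1} \cap K_k$. By the inductive hypothesis, $H_{k+1}$ surjects onto $\Sp_{2g}(\bz/\ell^k\bz)$, which further surjects onto $\Sp_{2g}(\FF_\ell)$; consequently $N$ is a $\Sp_{2g}(\FF_\ell)$-submodule of $V$, and the inductive step reduces to showing $N = V$.

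Two ingredients drive this. First, I analyze the submodule structure of $V$ as an $\Sp_{2g}(\FF_\ell)$-module: for $g \geq 2$ and $\ell$ odd, the adjoint representation $V$ is irreducible, so the only submodules are $0$ and $V$; for $\ell = 2$, the scalar matrices $\FF_2 \cdot I$ (which lie in $\mathfrak{sp}_{2g}(\FF_2)$ since $I^{T}J+JI=2J=0$ in characteristic $2$) form a trivial submodule, and one verifies that this is the unique proper nonzero submodule. Second, I produce elements of $N$ by exploiting the commutator identity
\[
    [I + \ell^a A,\; I + \ell^b B] \;\equiv\; I + \ell^{a+b}\,[A, B] \pmod{\ell^{a+b+1}},
\]
which shows that commutators in $H_\ell$ of lifts of elements $A, B \in V$ realize the Lie bracket $[A, B]$ inside $N$. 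Because $H_\ell$ surjects mod $\ell^k$, every pair $A, B$ admits such lifts, so $N \supseteq [V, V]$. For odd $\ell$ the Lie algebra $\mathfrak{sp}_{2g}(\FF_\ell)$ is perfect, so $[V,V] = V$ and we are done; combined with irreducibility, this also shows that any nonzero $N$ equals $V$.

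The main obstacle will be the prime $\ell = 2$, where commutators alone produce only $[V, V]$, which sits strictly inside $V$ because $I$ is central. To close the induction in this case, I need to exhibit the ``missing'' element $I + 2^k I$ inside $H_{k+1}$. I expect this to follow from an explicit construction inside $\Sp_{2g}(\bz/2^{k+1}\bz)$: one arranges a specific product of powers and commutators of carefully chosen lifts of elements of $\Sp_{2g}(\FF_2)$, using that for $g \geq 2$ (in contrast to the $g = 1$ counterexamples over $\ZZ/4\ZZ$) the group $\Sp_{2g}(\FF_2)$ is sufficiently rich to force the would-be splitting to fail. Once this element is produced, $N$ contains both $\FF_2 \cdot I$ and $[V, V]$, hence equals $V$, completing the induction and hence the proof of Theorem~\ref{jamesdreamcometrue}.
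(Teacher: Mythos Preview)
Your inductive framework is sound for $k \geq 2$, but the crucial step $k = 1$ (lifting from $\ell$ to $\ell^2$) is not handled, and this is precisely where all the difficulty lies. Your commutator identity
\[
    [I + \ell^a A,\; I + \ell^b B] \equiv I + \ell^{a+b}\,[A, B] \pmod{\ell^{a+b+1}}
\]
is only meaningful when $a, b \geq 1$ (otherwise $I + A$ need not lie in the symplectic group, and in any case the identity fails). To land in $K_k$ you need $a + b = k$, so the method produces elements of $N$ only when $k \geq 2$; and even then you must already know that $H_\ell$ hits $I + \ell^a A$ modulo $\ell^{a+1}$, which for the smallest choice $a = 1$ is exactly the statement you are trying to prove when $k = 1$. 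So for $k = 1$ your argument yields nothing in $N$, and you cannot invoke irreducibility or perfectness of $\mathfrak{sp}_{2g}(\FF_\ell)$ to conclude $N = V$ without first exhibiting some nonzero element. (Equivalently: ruling out $N = 0$ amounts to showing the extension $1 \to \mathfrak{sp}_{2g}(\FF_\ell) \to \Sp_{2g}(\bz/\ell^2\bz) \to \Sp_{2g}(\FF_\ell) \to 1$ does not split, which you have not addressed.) For $\ell = 2$ you also explicitly leave the production of $I + 2^k I$ as an expectation rather than an argument, and your assertion that $\FF_2 \cdot I$ is the \emph{unique} proper nonzero submodule of $\mathfrak{sp}_{2g}(\FF_2)$ is not correct in general (in characteristic $2$ there is a longer filtration coming from $\Lambda^2 \subset S^2$).

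By contrast, the paper isolates the passage $\ell \to \ell^2$ as the hard step and treats it by entirely different means: for $\ell \geq 5$ it takes $\ell$-th powers of elements $I + M$ with $M^2 = 0$ to produce a codimension-one subspace of $K_1$ inside $H(\ell^2)$, and then uses the computation of $\Sp_{2g}(\bz/\ell^2\bz)^{\ab}$ to rule out index $\ell$; for $\ell \in \{2,3\}$ it runs an induction on $g$ (not on $k$), with the base case $g = 2$ verified by machine. Only after $H(\ell^2)$ (respectively $H(8)$ when $\ell = 2$) is established does the standard lifting to $\ell^\infty$ apply. If you want to rescue your approach, you need an independent mechanism---powers, an explicit non-splitting argument, or a cohomology computation---to get off the ground at $k = 1$.
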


\begin{remark}
A more general version of Theorem~\ref{jamesdreamcometrue} is proven for a large class of semisimple Lie groups $G$ in
\cite[Theorem B]{weigel:on-the-profinite-completion-of-arithmetic-groups-of-split-type} (except for the case that $g = 3$ and $\ell = 2$)
and also in
~\cite[Theorem 1.3]{vasiu2003surjectivity}. In the present article, we provide an elementary and self-contained proof for the special case $G = \Sp_{2g}$. In particular, our inductive method circumvents the use of Lie theory, and is therefore suitable for somewhat more general groups (e.g.\ certain finite-index subgroups of matrix groups over $\bz_\ell$) which arise in the study of Galois representations associated to abelian varieties, cf.\ \cite{landesman-swaminathan-tao-xu:hyperelliptic-curves}. 
\end{remark}

\begin{remark}
To give a typical application, one can directly use Theorem~\ref{jamesdreamcometrue} to reduce the problem of checking that the $\ell$-adic Galois representation associated to an abelian variety has maximal image to the simpler problem of checking that the mod-$\ell$ reduction has maximal image. Indeed, the conclusion of Theorem~\ref{jamesdreamcometrue} has been applied many times in the study of Galois representations, such as
	in~\cite[Proof of Lemma 2.4]{zywina2015example},
	~\cite[Proof of Theorem 8]{Hui2016},
	~\cite[Proof of Corollary 3.5]{achter-pries:integral-monodromy-of-hyperelliptic-and-trielliptic-curves},~\cite[Proof of Lemma 5.1]{achter-pries:monodromy-of-p-rank-strat-of-the-moduli-space-of-curves},~\cite[p.~467]{scoopdedoo},
	as well in the authors' own papers~\cite{landesman-swaminathan-tao-xu:rational-families} and~\cite{landesman-swaminathan-tao-xu:hyperelliptic-curves}. 
\end{remark}

The rest of this paper is organized as follows. In Section~\ref{subsection:stimpy}, we introduce the basic definitions and properties of the symplectic group. Next, in Section~\ref{subsection:abelianizations}, we compute the commutator subgroups of $\Sp_{2g}(\ZZ_\ell)$ and $\Sp_{2g}(\ZZ/\ell^k \ZZ)$ for every prime number $\ell$ and positive integer $k$. Finally, in Section~\ref{doyoulift}, we prove Theorem~\ref{jamesdreamcometrue}.
 
\section{Background on Symplectic Groups}

In this section, we first detail the basic definitions and properties of symplectic groups, and then proceed to prove a number of lemmas that are used in our proof of Theorem~\ref{jamesdreamcometrue}.

\subsection{Symplectic Groups}\label{subsection:stimpy}
Fix a commutative ring $R$,
let $\on{Mat}_{2g \times 2g}(R)$ denote the space of $2g \times 2g$ matrices with entries in $R$, and let $\Omega_{2g} \in \on{Mat}_{2g \times 2g}(R)$ be defined by
$$\Omega_{2g} \defeq \left[\begin{array}{c|c} 0 & \id_g \\ \hline -\id_g & 0\end{array}\right],$$
where $\id_g$ denotes the $g \times g$ identity matrix. 
We define 
the symplectic group $\Sp_{2g}(R)$ as the set of $M \in \SL_{2g}(R)$ so that $M^T \Omega_{2g} M = \Omega_{2g}$.

In the proof of Theorem~\ref{jamesdreamcometrue}, we will make heavy use of the ``Lie algebra'' $\mf{sp}_{2g}(R)$, which is defined by 
		\begin{align*} 
			\mf{sp}_{2g}(R) &\defeq \{M \in \on{Mat}_{2g \times 2g}(R) : M^T \Omega_{2g} + \Omega_{2g} M = 0 \}. 
		\end{align*} 
		It is easy to see that $M^T \Omega_{2g} + \Omega_{2g}M = 0$ is equivalent to $M$ being a block matrix with $g \times g$ blocks of the form
		\[
			M = \left[\begin{array}{c|c} A & B \\ \hline C & -A^T \end{array}\right], 
		\]
		where $B$ and $C$ are symmetric.

In what follows, we specialize to studying symplectic groups over $R = \ZZ$, $R = \ZZ_{\ell}$, or $R = \ZZ / \ell^k \ZZ$ for $\ell$ a prime number and $k$ a positive integer. We will adhere to the following notational conventions:
\begin{itemize}
\item Let $H_{\ell} \subset \Sp_{2g}(\ZZ_\ell)$ be a closed subgroup.
\item Let $H(\ell^k) \subset \Sp_{2g}(\ZZ/\ell^k \ZZ)$ be the mod-$\ell^k$ reduction of $H_\ell$.
\item Notice that the map $S \mapsto \id_{2g} + \ell^k S$ gives an isomorphism of groups
$$\mf{sp}_{2g}(\ZZ/ \ell \ZZ) \simeq \ker(\Sp_{2g}(\ZZ/\ell^{k+1} \ZZ) \to \Sp_{2g}(\ZZ/ \ell^k \ZZ))$$
for every $k \geq 1$. We will use the Lie algebra notation $\mf{sp}_{2g}(\mathbb Z/\ell\ZZ)$ to denote the above kernel when we want to think of its elements additively, and we will use the kernel notation when we want to view its elements multiplicatively.
\item For any group $G$, let $[G,G]$ be its commutator subgroup, and let $G^{\ab} = G/[G,G]$ be its abelianization.
\end{itemize}

\subsection{Commutators}
\label{subsection:abelianizations}

We shall now compute the abelianizations of $\Sp_{2g}(\bz_\ell)$ and $\Sp_{2g}(\bz / \ell^k \ZZ)$ for every integer $g \geq 2$, prime number $\ell$, and positive integer $k$. It will first be convenient for us to compute the abelianization $\Sp_{2g}(\ZZ)^{\ab}$.

\begin{lemma} \label{theorem:commutator-sp-z}
The group $\Sp_{2g}(\bz)^{\ab}$ is trivial when $g \geq 3$ and is isomorphic to \mbox{$\mathbb Z/2 \ZZ$ when $g = 2$.}
\end{lemma}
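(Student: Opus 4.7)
The plan is to write $\Sp_{2g}(\ZZ)$ in terms of standard Chevalley generators for the root system $C_g$ and to compute the abelianization from their commutator identities. Concretely, I would take as generators the long-root transvections $X_i(c) \defeq \id + c\,e_{i,g+i}$, the short-root elements $Y_{ij}(c) \defeq \id + c(e_{i,g+j}+e_{j,g+i})$ for $i<j$, the Levi generators $Z_{ij}(c) \defeq \id + c(e_{ij} - e_{g+j,g+i})$ for $i\neq j$, and their transposes; it is classical (via block-wise row reduction) that these generate $\Sp_{2g}(\ZZ)$.

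Weyl permutations show that $[X_i(1)]$ and $[Y_{ij}(1)]$ are each independent of the indices in the abelianization. A direct calculation, conjugating the symmetric upper block by the Levi element $\id + s\,e_{ij}$, would produce the Chevalley-type identities
\[
[Z_{ij}(s), X_j(t)] = Y_{ij}(st)\cdot X_i(s^2 t), \quad [Z_{ij}(s), Y_{ij}(t)] = X_i(2st),
\]
together with the familiar $\SL_g$ identity $[Z_{ij}(1), Z_{jk}(1)] = Z_{ik}(\pm 1)$. Passing to the abelianization and setting $s=t=1$, one reads off $[Z_{ij}(1)] = 0$, $[Y_{ij}(1)] = -[T]$, and $2[T] = 0$, where $T \defeq X_1(1)$. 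Hence $\Sp_{2g}(\ZZ)^{\ab}$ is a cyclic group of order dividing $2$, generated by $[T]$.

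For $g\geq 3$, I would finish by using a third distinct index $k$ to exploit the Chevalley identity
\[
[x_{e_i - e_k}(1), x_{e_k+e_j}(1)] = Y_{ij}(\pm 1),
\]
where the right-hand side has only this single term because $2(e_i-e_k)+(e_k+e_j)$ and $(e_i-e_k)+2(e_k+e_j)$ are not roots of $C_g$. This gives $[T] = \mp[Y_{ij}(1)] = 0$, so $\Sp_{2g}(\ZZ)^{\ab}$ is trivial. For $g=2$ no such third index is available, and the quotient is in fact nontrivial: composing with the classical isomorphism $\Sp_4(\FF_2)\cong S_6$ gives a surjection $\Sp_4(\ZZ) \twoheadrightarrow \Sp_4(\FF_2) \cong S_6 \xrightarrow{\on{sgn}} \ZZ/2\ZZ$, under which $T$ (its mod-$2$ reduction being a symplectic transvection, hence a transposition in $S_6$) has nontrivial image. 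The main obstacle is the explicit block-matrix commutator computation establishing the two key identities above, which requires careful bookkeeping of the characteristic $C_g$-type quadratic terms $s^2 t$.
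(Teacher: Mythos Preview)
Your approach via Chevalley--Steinberg commutator relations is genuinely different from the paper's. The paper simply cites Bass--Milnor--Serre for $g\geq 3$ and, for $g=2$, invokes Bender's explicit two-generator presentation of $\Sp_4(\ZZ)$, reading off from three of Bender's relations that the abelianization is a quotient of $\ZZ/2\ZZ$; the lower bound via $\Sp_4(\FF_2)\cong S_6\xrightarrow{\mathrm{sgn}}\ZZ/2\ZZ$ is exactly the same as yours. Your route is more self-contained and uniform in $g$, at the cost of having to verify the $C_g$ commutator identities by hand; the paper's is shorter but leans entirely on two external references.

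There is, however, one gap in your sketch. The identity $[Z_{ij}(1),Z_{jk}(1)]=Z_{ik}(\pm 1)$, from which you conclude $[Z_{ij}(1)]=0$ in the abelianization, requires three distinct indices $i,j,k$ and hence $g\geq 3$. For $g=2$ only $Z_{12}$ and $Z_{21}$ are available, and your argument as written does not place their classes in $\{0,[T]\}$; a priori the abelianization could then be larger than cyclic of order~$2$. The fix is easy and already implicit in your appeal to Weyl elements: the Weyl group of $C_2$ acts transitively on short roots, and the reflection $s_{2e_2}$ (realized in $\Sp_4(\ZZ)$) conjugates $Z_{12}(1)$ to $Y_{12}(\pm 1)$, giving $[Z_{12}(1)]=\pm[T]$. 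The same Weyl-orbit argument disposes of the transposes (negative root elements), which you list among the generators but never explicitly treat. Once you make these two points explicit, your argument is complete.
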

\begin{proof}
	The case $g \geq 3$ follows from~\cite[Remark, p.\ 123]{bass1967solution}, so it only remains to deal with the case $g = 2$. By~\cite[Satz]{bender1980presentation}, $\Sp_{4}(\mathbb Z)$ has two generators $K$ and $L$ that satisfy several relations, three of which are given as follows:
\begin{align*}
K^2 &= \id_{2g}, \\
L^{12} &= \id_{2g}, \\
(K\cdot L^5)^5 &= (L^6\cdot K\cdot L^5\cdot K\cdot L^7\cdot K)^2.
\end{align*}
By the universal property of the abelianization, we have that $\Sp_4(\ZZ)^{\ab}$ is a quotient of the rank-$2$ free abelian group $K \mathbb Z \oplus L \mathbb Z$ generated by $K$ and $L$. Thus, from the aforementioned multiplicative relations between $K$ and $L$ in $\Sp_4(\mathbb Z)$, we obtain the following additive relations in the abelianization
\begin{align*}
2K &= 0, \\
12L &= 0, \\
5(K + 5L) &= 2\cdot (6L + K + 5L + K + 7L + K).
\end{align*}
Substituting the first two relations above into the third relation, we find $L = K$, which implies that $\Sp_4(\ZZ)^{\ab}$ is a quotient of $(K \mathbb Z \oplus L \mathbb Z)/(2K, K - L) \simeq \mathbb Z / 2\bz$.
		
It remains to show that $\Sp_{4}(\mathbb Z)$ maps surjectively onto $\mathbb Z/2\bz$. Postcomposing the surjection $\Sp_{4}(\mathbb Z) \twoheadrightarrow \Sp_{4}(\mathbb Z/2 \ZZ)$ with the isomorphism $\Sp_{4}(\mathbb Z/2 \ZZ) \simeq S_6$ by \cite[3.1.5]{omeara1978symplectic} and then applying the sign map $S_6 \twoheadrightarrow \mathbb Z/2 \ZZ$ \mbox{yields the desired result.} \end{proof}

\begin{remark}
Let $\wh{\ZZ}$ denote the profinite completion of $\ZZ$. It follows immediately from Lemma~\ref{theorem:commutator-sp-z}, together with the fact if $G_1$ and $G_2$ are groups with $\wh{G_1} \simeq \wh{G_2}$ then $\wh{G_1^{\ab}} \simeq \wh{G_2^{\ab}}$, that the group $\Sp_{2g}(\widehat {\mathbb Z})^{\ab}$ is trivial for $g \geq 3$ and is isomorphic to \mbox{$\ZZ / 2 \ZZ$ when $g = 2$.}
\end{remark}

Using Lemma~\ref{theorem:commutator-sp-z}, we can now compute
the abelianizations of all aforementioned groups.

\begin{proposition} \label{corollary:finite-commutator}
We have the following results:
\begin{enumerate}
\item The group $\Sp_{2g}(\mathbb Z_\ell)^{\ab}$ is trivial except when $g = \ell = 2$, in which case it is isomorphic to $\ZZ / 2 \ZZ$.
\item Let $k \ge 1$. The group $\Sp_{2g}(\mathbb Z/\ell^k \ZZ)^{\ab}$ is trivial except when $g = \ell = 2$, in which case it is isomorphic to $\ZZ / 2 \ZZ$.
\end{enumerate}
\end{proposition}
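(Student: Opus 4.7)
The plan is to reduce part (2) to Lemma~\ref{theorem:commutator-sp-z} via strong approximation (equivalently, the surjectivity of the reduction map on elementary symplectic generators), and then to obtain part (1) by passing to the inverse limit over $k$.

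For part (2), first observe that strong approximation for $\Sp_{2g}$ over $\mathbb{Z}$ gives that the reduction $\Sp_{2g}(\mathbb{Z}) \twoheadrightarrow \Sp_{2g}(\mathbb{Z}/\ell^k\mathbb{Z})$ is surjective; hence $\Sp_{2g}(\mathbb{Z}/\ell^k\mathbb{Z})^{\ab}$ is a quotient of $\Sp_{2g}(\mathbb{Z})^{\ab}$. Applying Lemma~\ref{theorem:commutator-sp-z}, this abelianization is trivial when $g \geq 3$, settling those cases, and is a quotient of $\mathbb{Z}/2\mathbb{Z}$ when $g = 2$.

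To finish the $g = 2$ case, I would split on $\ell$. When $\ell = 2$, the composition
$$\Sp_4(\mathbb{Z}/2^k\mathbb{Z}) \twoheadrightarrow \Sp_4(\mathbb{Z}/2\mathbb{Z}) \cong S_6 \twoheadrightarrow \mathbb{Z}/2\mathbb{Z},$$
where the last arrow is the sign character (as in the proof of Lemma~\ref{theorem:commutator-sp-z}), yields a nontrivial homomorphism to $\mathbb{Z}/2\mathbb{Z}$, forcing the abelianization to equal $\mathbb{Z}/2\mathbb{Z}$. When $\ell \geq 3$, I would invoke the classical fact that $\Sp_4(\mathbb{F}_\ell)$ is perfect (since $\mathrm{PSp}_4(\mathbb{F}_\ell)$ is simple outside the small exceptions $(g,q) \in \{(1,2),(1,3),(2,2)\}$, and this forces perfectness of the double cover). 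Combining perfectness with the short exact sequence
$$1 \to N \to \Sp_4(\mathbb{Z}/\ell^k\mathbb{Z}) \to \Sp_4(\mathbb{F}_\ell) \to 1,$$
where $N$ is an $\ell$-group (successive extensions of $\mathfrak{sp}_4(\mathbb{F}_\ell)$), one sees that $\Sp_4(\mathbb{Z}/\ell^k\mathbb{Z})^{\ab}$ is a quotient of the coinvariants $N/[\Sp_4(\mathbb{Z}/\ell^k\mathbb{Z}),N]$ and hence has $\ell$-power order. Any group that is simultaneously a quotient of $\mathbb{Z}/2\mathbb{Z}$ and of $\ell$-power order for odd $\ell$ must be trivial.

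For part (1), I would use $\Sp_{2g}(\mathbb{Z}_\ell) = \varprojlim_k \Sp_{2g}(\mathbb{Z}/\ell^k\mathbb{Z})$ together with the fact that continuous abelianization commutes with such inverse limits of profinite groups; the results of part (2) then transfer directly. The main obstacle is ensuring one works with the correct notion of abelianization for the profinite group $\Sp_{2g}(\mathbb{Z}_\ell)$: if $G^{\ab}$ is interpreted as the quotient by the closure of the commutator subgroup, the inverse-limit argument is immediate, while for the abstract version one must additionally invoke (or prove) that the commutator subgroup of the compact $\ell$-adic analytic group $\Sp_{2g}(\mathbb{Z}_\ell)$ is already closed — a subtle but standard fact about $p$-adic analytic groups that I would want to cite or verify carefully.
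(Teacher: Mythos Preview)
Your argument is correct, but the route differs from the paper's in two notable ways. First, you reverse the order: you establish part~(2) directly from Lemma~\ref{theorem:commutator-sp-z} via the surjection $\Sp_{2g}(\mathbb{Z})\twoheadrightarrow\Sp_{2g}(\mathbb{Z}/\ell^k\mathbb{Z})$, and then deduce part~(1) by an inverse-limit argument; the paper instead proves (a) first and then obtains (b) in one line from the surjection $\Sp_{2g}(\mathbb{Z}_\ell)\twoheadrightarrow\Sp_{2g}(\mathbb{Z}/\ell^k\mathbb{Z})$. Second, for the case $g=2$, $\ell$ odd, you argue structurally (perfectness of $\Sp_4(\mathbb{F}_\ell)$ together with the kernel being an $\ell$-group forces the abelianization to have $\ell$-power order, hence be trivial since it is also a quotient of $\mathbb{Z}/2\mathbb{Z}$), whereas the paper uses a neat global trick: from
\[
\mathbb{Z}/2\mathbb{Z}\;\cong\;\Sp_4(\widehat{\mathbb{Z}})^{\ab}\;\cong\;\prod_{\ell}\Sp_4(\mathbb{Z}_\ell)^{\ab}
\]
and the already-established $\Sp_4(\mathbb{Z}_2)^{\ab}\cong\mathbb{Z}/2\mathbb{Z}$, every other factor must vanish. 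The paper's ordering avoids precisely the subtlety you flag about abstract versus topological abelianization in passing to the inverse limit; your approach, on the other hand, bypasses $\widehat{\mathbb{Z}}$ entirely and is more self-contained for the finite-level statement~(2), which is in fact the only part invoked later in the paper. Either way works; if you keep your ordering, the cleanest fix for~(1) is simply to note that any surjection from $\Sp_{2g}(\mathbb{Z}_\ell)$ onto a nontrivial finite abelian group would, being continuous on a profinite group, factor through some $\Sp_{2g}(\mathbb{Z}/\ell^k\mathbb{Z})$, contradicting~(2).
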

\begin{proof}
We first verify Statement (a). For $g \ge 3$, since $\Sp_{2g}(\wh{\mathbb Z})^{\ab}$ is trivial, and since we have a surjection $\Sp_{2g}(\zh) \twoheadrightarrow \Sp_{2g}(\mathbb Z_\ell)$, the result follows immediately. 
Now take $g = 2$. 
First, we have surjections
$\Sp_4(\widehat{\mathbb Z}) \twoheadrightarrow \Sp_4(\mathbb Z_2) \twoheadrightarrow \Sp_4(\mathbb Z/2 \ZZ) \cong S_6$.
Since the former and the latter have abelianizations isomorphic
to $\mathbb Z/2 \ZZ$, using Lemma~\ref{theorem:commutator-sp-z}, it follows
that $\Sp_4(\mathbb Z_2)^{\ab} \cong \mathbb Z/2 \ZZ$.
Then, since we have
\begin{align*}
	\mathbb Z/2 \ZZ \cong \Sp_4(\widehat{\mathbb Z})^{\ab} \cong \Sp_4(\mathbb Z_2)^{\ab} \times \prod_{\ell\neq 2} \Sp_4(\mathbb Z_\ell)^{\ab} \cong \mathbb Z/2\ZZ \times \prod_{\ell\neq 2} \Sp_4(\mathbb Z_\ell)^{\ab},
\end{align*}
it follows that $\Sp_4(\mathbb Z_\ell)^{\ab}$ is trivial for $\ell \neq 2$.

Now, observe that Statement (b) follows from Statement (a): indeed, notice that the surjection $\Sp_{2g}(\mathbb Z_\ell) \rightarrow \Sp_{2g}(\mathbb Z/ \ell^k \ZZ)$ induces a surjection $\Sp_{2g}(\mathbb Z_\ell)^{\ab} \rightarrow \Sp_{2g}(\mathbb Z/ \ell^k \ZZ)^{\ab}$ and in the case
of $\ell = g = 2$, $\Sp_4(\mathbb Z/ 2^k \ZZ)^{\ab}$ is nontrivial as it surjects
onto $\Sp_4(\mathbb Z/ 2 \ZZ)^{\ab} \cong \mathbb Z/2 \ZZ$.
\end{proof}

\section{Proof of Theorem~\ref{jamesdreamcometrue}}\label{doyoulift}
In this section, we provide a complete proof of the main theorem of this paper, namely Theorem~\ref{jamesdreamcometrue}. The basic strategy has two steps: lift from $\ell^2$ to $\ell^\infty$ (see Section~\ref{allthewaytonight}), and lift from $\ell$ to $\ell^2$ (see Section~\ref{bataneye}). Considerable care must be taken in dealing with the cases where $\ell = 2, 3$, so we treat these situations separately (see Sections~\ref{booyah} and~\ref{thisishisbananas}). We execute this strategy as follows:
	
	\subsection{Lifting from $\ell^2$ to $\ell^\infty$ for $\ell \ge 3$, and from $8$ to $2^\infty$} \label{allthewaytonight}
	
	\begin{lemma} \label{proposition:lifting-l-squared}
		If $\ell \ge 3$, then $H(\ell^2) = \Sp_{2g}(\bz / \ell^2 \ZZ)$ implies $H_\ell = \Sp_{2g}(\bz_\ell)$. If $\ell = 2$, then $H(8) = \Sp_{2g}(\bz / 8 \ZZ)$ implies $H_2 = \Sp_{2g}(\bz_2)$.  
	\end{lemma}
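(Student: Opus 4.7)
The plan is to prove by induction on $k$ that $H(\ell^k) = \Sp_{2g}(\ZZ/\ell^k\ZZ)$ for every $k \ge k_0$, where $k_0 = 2$ if $\ell \ge 3$ and $k_0 = 3$ if $\ell = 2$; the hypothesis of the lemma supplies the base case. The conclusion $H_\ell = \Sp_{2g}(\ZZ_\ell)$ then follows because $H_\ell$ is closed in the profinite group $\Sp_{2g}(\ZZ_\ell) = \varprojlim_k \Sp_{2g}(\ZZ/\ell^k\ZZ)$.

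For the inductive step, suppose $k \ge k_0$ and $H(\ell^k) = \Sp_{2g}(\ZZ/\ell^k\ZZ)$. Let $K_k$ denote the kernel of the reduction $\Sp_{2g}(\ZZ/\ell^{k+1}\ZZ) \twoheadrightarrow \Sp_{2g}(\ZZ/\ell^k\ZZ)$, which via $S \mapsto \id + \ell^k S$ is identified additively with $\mf{sp}_{2g}(\ZZ/\ell\ZZ)$, and set $N \defeq H(\ell^{k+1}) \cap K_k$. Since $H(\ell^{k+1})$ surjects onto $H(\ell^k) = \Sp_{2g}(\ZZ/\ell^k\ZZ)$, the inductive step reduces to showing $N = K_k$.

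The main tool is a commutator calculation: for any $a, b \in \mf{sp}_{2g}(\ZZ/\ell\ZZ)$, the inductive hypothesis provides lifts $\tilde x, \tilde y \in H(\ell^{k+1})$ with $\tilde x \equiv \id + \ell a \pmod{\ell^2}$ and $\tilde y \equiv \id + \ell^{k-1} b \pmod{\ell^k}$, and a direct expansion yields $[\tilde x, \tilde y] \equiv \id + \ell^k[a, b] \pmod{\ell^{k+1}}$. Thus $N$ contains $\id + \ell^k c$ for every $c$ in the derived subalgebra $[\mf{sp}_{2g}(\ZZ/\ell\ZZ), \mf{sp}_{2g}(\ZZ/\ell\ZZ)]$. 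For $\ell \ge 3$ and $g \ge 2$, $\mf{sp}_{2g}(\ZZ/\ell\ZZ)$ is simple, hence coincides with its derived subalgebra, so $N = K_k$ and the induction closes.

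The main obstacle is the case $\ell = 2$: here $\id_{2g}$ lies in the center of $\mf{sp}_{2g}(\ZZ/2\ZZ)$ and the derived subalgebra is proper. To compensate, we exploit the stronger base case $k_0 = 3$ via iterated squaring: for any $\tilde x \equiv \id + 2a \pmod 4$ in $H(2^{k+1})$, one checks by induction on $k$ that $\tilde x^{2^{k-1}} \equiv \id + 2^k(a + a^2) \pmod{2^{k+1}}$, placing $a + a^2$ in $N$ for every $a \in \mf{sp}_{2g}(\ZZ/2\ZZ)$. A block-matrix computation using the description of $\mf{sp}_{2g}$ from Section~\ref{subsection:stimpy}, combined with the classical fact that every matrix over a field factors as a product of two symmetric matrices, then shows that $\{a + a^2 : a \in \mf{sp}_{2g}(\ZZ/2\ZZ)\}$ additively spans $\mf{sp}_{2g}(\ZZ/2\ZZ)$, completing the $\ell = 2$ case.
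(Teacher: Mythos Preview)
Your proof is correct. The paper itself gives no details here, simply deferring to Serre's argument for $\SL_2$ and asserting it ``readily generalizes,'' so your write-up is already more complete than what appears in the paper.

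That said, the argument Serre uses (and which the paper has in mind) is different from yours and somewhat simpler: it proceeds by $\ell$-th powers rather than commutators. Given the inductive hypothesis $H(\ell^k)=\Sp_{2g}(\ZZ/\ell^k\ZZ)$, for any $Y\in\mf{sp}_{2g}(\ZZ/\ell\ZZ)$ one lifts $\id+\ell^{k-1}Y\in\Sp_{2g}(\ZZ/\ell^k\ZZ)$ to some $\tilde x\in H(\ell^{k+1})$ and computes directly that $\tilde x^{\,\ell}\equiv \id+\ell^k Y\pmod{\ell^{k+1}}$ whenever $\ell\ge 3,\ k\ge 2$ or $\ell=2,\ k\ge 3$ (the point being that $\binom{\ell}{2}\ell^{2(k-1)}$ and $\ell^{\ell(k-1)}$ both lie in $\ell^{k+1}\ZZ$ under these constraints). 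This yields $N=K_k$ immediately, with no appeal to simplicity of $\mf{sp}_{2g}(\FF_\ell)$ and no need for your spanning argument for $\{a+a^2\}$ in the $\ell=2$ case. Your commutator approach has the conceptual virtue of isolating the Lie-theoretic input (perfectness of $\mf{sp}_{2g}(\FF_\ell)$ for odd $\ell$) and would adapt to other algebraic groups with perfect reduction; the power-map approach is more elementary and uniform, requiring no structural facts about $\mf{sp}_{2g}$ beyond the identification $K_k\simeq\mf{sp}_{2g}(\FF_\ell)$.
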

	\begin{proof}
		This is done in the $g = 1$ case in in~\cite[Lemma 3, Section IV.3.4]{serre1989abelian}, which readily generalizes to the case of $g \geq 2$.
	\end{proof}
	
	\subsection{Lifting from $\ell$ to $\ell^2$ for $\ell \ge 5$}\label{bataneye}
	
	\begin{lemma} \label{proposition:lifting-l}
		Fix $g \ge 2$. If $\ell \ge 5$, then $H(\ell) = \Sp_{2g}(\bz / \ell \ZZ)$ implies $H(\ell^2) \subset \Sp_{2g}(\bz / \ell^2 \ZZ)$. 
	\end{lemma}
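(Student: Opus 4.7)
The plan is to analyze $H(\ell^2)$ via the short exact sequence
\[
1 \to K \to H(\ell^2) \to \Sp_{2g}(\ZZ/\ell\ZZ) \to 1,
\]
where $K := H(\ell^2) \cap \ker\bigl(\Sp_{2g}(\ZZ/\ell^2\ZZ) \twoheadrightarrow \Sp_{2g}(\ZZ/\ell\ZZ)\bigr)$. Via the identification $\id_{2g} + \ell M \leftrightarrow M$ recorded in Section~\ref{subsection:stimpy}, the subgroup $K$ is an abelian subgroup of $\mf{sp}_{2g}(\ZZ/\ell\ZZ)$. Because $K$ is abelian, $H(\ell^2)$-conjugation on $K$ descends to an action of $H(\ell^2)/K \cong \Sp_{2g}(\ZZ/\ell\ZZ)$, and a direct matrix computation identifies it with the restriction to $K$ of the adjoint action $M \mapsto g M g^{-1}$.

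Invoking the standard fact that the adjoint representation of $\Sp_{2g}(\ZZ/\ell\ZZ)$ on $\mf{sp}_{2g}(\ZZ/\ell\ZZ)$ is irreducible when $\ell \ge 5$ and $g \ge 2$, one gets $K = 0$ or $K = \mf{sp}_{2g}(\ZZ/\ell\ZZ)$. In the latter case, comparing cardinalities yields $H(\ell^2) = \Sp_{2g}(\ZZ/\ell^2\ZZ)$ as desired.

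The main obstacle is ruling out $K = 0$. In that case the projection restricts to an isomorphism $H(\ell^2) \cong \Sp_{2g}(\ZZ/\ell\ZZ)$, yielding a group-theoretic section $s\colon \Sp_{2g}(\ZZ/\ell\ZZ) \hookrightarrow \Sp_{2g}(\ZZ/\ell^2\ZZ)$. I would then exhibit an element of order $\ell$ no lift of which has order dividing $\ell$. Pick a nonzero $N \in \mf{sp}_{2g}(\ZZ/\ell\ZZ)$ with $N^2 = 0$ (for instance, a long-root vector); then $T := \id_{2g} + N$ lies in $\Sp_{2g}(\ZZ/\ell\ZZ)$ (since $N^T \Omega N = -\Omega N^2 = 0$) and has order exactly $\ell$. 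Every lift of $T$ to $\Sp_{2g}(\ZZ/\ell^2\ZZ)$ has the form $T(\id_{2g} + \ell X)$ for some $X \in \mf{sp}_{2g}(\ZZ/\ell\ZZ)$, and an easy induction shows
\[
\bigl(T(\id_{2g} + \ell X)\bigr)^k \equiv T^k\Bigl(\id_{2g} + \ell \sum_{i=0}^{k-1} T^{-i} X T^i\Bigr) \pmod{\ell^2}.
\]
Setting $k = \ell$ and using $T^{\pm i} = \id_{2g} \pm iN$ (valid since $N^2 = 0$) yields $T^{-i} X T^i = X + i(XN - NX) - i^2\, NXN$; the identities $\sum_{i=0}^{\ell-1} i \equiv 0 \pmod \ell$ (valid for $\ell \neq 2$) and $\sum_{i=0}^{\ell-1} i^2 \equiv 0 \pmod \ell$ (valid for $\ell \neq 2, 3$, which is precisely where $\ell \ge 5$ enters) force the sum to vanish modulo $\ell$. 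Since $T^\ell = \id_{2g} + \ell N \not\equiv \id_{2g} \pmod{\ell^2}$, no lift of $T$ has order $\ell$, contradicting $s(T)^\ell = s(\id_{2g}) = \id_{2g}$. The binomial/conjugation estimate in this last step is the main technical point, and its failure at $\ell \in \{2, 3\}$ accounts for the separate treatment of those primes in Sections~\ref{booyah} and~\ref{thisishisbananas}.
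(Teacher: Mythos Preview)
Your proof is correct, but it takes a genuinely different route from the paper's.

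Both arguments ultimately rest on the same $\ell$-th power computation: for $N \in \mf{sp}_{2g}(\ZZ/\ell\ZZ)$ with $N^2 = 0$, the $\ell$-th power of any lift of $\id_{2g}+N$ to $\Sp_{2g}(\ZZ/\ell^2\ZZ)$ is congruent to $\id_{2g}+\ell N$ (and this is exactly where $\ell \ge 5$ is needed). The paper uses this \emph{constructively}: it shows directly that $\id_{2g}+\ell M \in H(\ell^2)$ for enough matrices $M$ to fill out a codimension-one subspace $W \subset \mf{sp}_{2g}(\ZZ/\ell\ZZ)$ (the trace-zero locus), and then appeals to the abelianization computation of Proposition~\ref{corollary:finite-commutator} to rule out $[H(\ell^2) \cap \ker\phi_{\ell,2g}] = W$. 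You instead set up the dichotomy $K = 0$ or $K = \mf{sp}_{2g}(\ZZ/\ell\ZZ)$ via irreducibility of the adjoint representation, and use the same $\ell$-th power calculation \emph{obstructively}, to show that no section $\Sp_{2g}(\ZZ/\ell\ZZ)\hookrightarrow \Sp_{2g}(\ZZ/\ell^2\ZZ)$ can exist, hence $K\neq 0$.

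What each approach buys: your argument is cleaner and more conceptual, and it bypasses Section~\ref{subsection:abelianizations} entirely; on the other hand it imports the irreducibility of $\mf{sp}_{2g}(\FF_\ell)$ as an $\Sp_{2g}(\FF_\ell)$-module, a representation-theoretic fact (equivalent to the irreducibility of $\mathrm{Sym}^2$ of the standard module in odd characteristic) that the paper deliberately avoids in order to keep the proof elementary and self-contained. The paper's route trades that fact for the explicit presentation-based computation of $\Sp_{2g}(\ZZ)^{\ab}$ in Lemma~\ref{theorem:commutator-sp-z}. One minor point worth making explicit in your write-up: when you pass to lifts you should fix once and for all an integral $N$ (e.g.\ a single long-root elementary matrix) with $N^2=0$ over $\ZZ$, so that $\id_{2g}+N$ is genuinely symplectic over $\ZZ/\ell^2\ZZ$ and the formula $T^\ell = \id_{2g}+\ell N$ holds on the nose.
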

	\begin{proof}
		It suffices to show that $H(\ell^2)$ contains all of 
		\[
			\ker( \Sp_{2g}(\bz / \ell^2 \ZZ) \twoheadrightarrow \Sp_{2g}(\bz / \ell \ZZ)) = \id_{2g} + \ell \cdot \mf{sp}_{2g}(\bz / \ell \ZZ). 
		\]
		We prove this by taking $\ell$-th powers of specific matrices. We want to pick $M \in \mf{sp}_{2g}(\bz / \ell \ZZ)$ such that $\id_{2g} + M$ lies in $\Sp_{2g}(\bz / \ell \ZZ)$ and such that $M^2 = 0$. As it happens, these two conditions are equivalent: indeed, since $M^T \Omega_{2g} + \Omega_{2g}M = 0$, we have 
		\begin{align*} 
			(\id_{2g} + M)^T \Omega_{2g} (\id_{2g} + M) &= \Omega_{2g} + M^T \Omega_{2g} + \Omega_{2g}M + M^T \Omega_{2g} M \\
			&= \Omega_{2g} + M^T \Omega_{2g} M \\
			&= \Omega_{2g} - \Omega_{2g}M^2, 
		\end{align*}
		so the condition that $\id_{2g} + M$ is symplectic is equivalent to the condition that $M^2 = 0$. Moreover, by expanding in terms of matrices, we see that $M^2 = 0$ if and only if
		\[
			\left[\begin{array}{c|c} A^2 + BC & AB - BA^T \\ \hline CA - A^T C & CB + (A^T)^2 \end{array}\right] = 0.
		\]
		So, if $A^2 = 0$ and two of $A, B,$ and $C$ are zero, the matrix $\id_{2g} + M$ will be symplectic.
		
		For $M \in \mf{sp}_{2g}(\ZZ/\ell \ZZ)$, choose an arbitrary lift of $M$ in $\on{Mat}_{2g \times 2g}(\bz / \ell^2 \ZZ)$, and by abuse of notation, also denote it $M$. By assumption, $H(\ell^2)$ contains an element of the form $\id_{2g} + M + \ell V$ for some $V \in \on{Mat}_{2g \times 2g}(\bz / \ell \ZZ)$. This means that $H(\ell^2)$ contains
		\begin{align*} 
			(\id_{2g} + M + \ell V)^\ell &\equiv \id_{2g} + \ell(M + \ell V) + \binom{\ell}{2} (M + \ell V)^2 + \cdots + \binom{\ell}{\ell - 1} (M + \ell V)^{\ell - 1} + (M + \ell V)^\ell \\
			&\equiv \id_{2g} + \ell M \pmod{\ell^2}, 
		\end{align*} 
        where the last step above relies crucially upon the assumption that $\ell \geq 5$.
We conclude that $H(\ell^2)$ contains $\id_{2g} + \ell M$ for every $M$ satisfying the above conditions. 
		
		Taking $A = C = 0$, we see that $H(\ell^2)$ contains 
		\[
			\id_{2g} + \ell \cdot \left[\begin{array}{c|c} 0 & B \\ \hline 0 & 0 \end{array}\right] 
		\]
		for any symmetric matrix $B$. Similarly, taking $A = B = 0$ shows that $H(\ell^2)$ contains 
		\[
			\id_{2g} + \ell \cdot \left[\begin{array}{c|c} 0 & 0 \\ \hline C & 0 \end{array}\right]
		\]
		for any symmetric matrix $C$. Taking $B = C = 0$ shows that $H(\ell^2)$ contains
		\[
			\id_{2g} + \ell \cdot \left[\begin{array}{c|c} A & 0 \\ \hline 0 & -A^T \end{array}\right]
		\]
		for any matrix $A$ with $A^2 = 0$. It is a standard fact that the span of such matrices $A$ is the space of trace zero matrices. Observe that $\on{tr} A = 0$ is a single linear condition on $\mf{sp}_{2g}(\bz / \ell \ZZ)$ that singles out a codimension-one linear subspace $W \subset \mf{sp}_{2g}(\bz / \ell \ZZ)$, so that 
		\begin{equation}\label{searstowerprom}
			\id_{2g} + \ell W \subset H(\ell^2) \cap \ker ( \Sp_{2g}(\bz / \ell^2 \ZZ) \twoheadrightarrow \Sp_{2g}(\bz / \ell \ZZ)). 
		\end{equation}
		Observe that if the inclusion in~\eqref{searstowerprom} were strict, then the right-hand side would be all of $\id_{2g} + \ell \cdot \mf{sp}_{2g}(\bz / \ell \ZZ)$, and the desired result follows. Therefore, suppose the inclusion in~\eqref{searstowerprom} is an equality. Since $W$ has index $\ell$ in $\mf{sp}_{2g}(\bz / \ell \ZZ)$, it follows that $H(\ell^2)$ has index $\ell$ in $\Sp_{2g}(\bz / \ell^2 \ZZ)$. We obtain a surjection 
		\[
			\Sp_{2g}(\bz / \ell^2 \ZZ) \twoheadrightarrow \Sp_{2g}(\bz / \ell^2 \ZZ) / H(\ell^2) \cong \bz / \ell \ZZ, 
		\]
		which contradicts Proposition~\ref{corollary:finite-commutator}. 
	\end{proof} 
	
	\subsection{Lifting from $4$ to $8$} \label{booyah}
	
	\begin{lemma} \label{proposition:lifting-4}
		Fix $g \ge 2$. Then $H(4) = \Sp_{2g}(\bz / 4 \ZZ)$ implies $H(8) = \Sp_{2g}(\bz / 8 \ZZ)$. 
	\end{lemma}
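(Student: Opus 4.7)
The plan is to mimic the proof of Lemma~\ref{proposition:lifting-l}, replacing $\ell$-th powers with squares and lifting from mod $4$ to mod $8$ instead of from mod $\ell$ to mod $\ell^2$. Starting one level higher (from mod $4$ rather than mod $2$) gains us an extra factor of $2$ in the binomial expansion, which compensates for the fact that $\binom{2}{1}$ is no longer divisible by $2$.

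Concretely, for any $M \in \mf{sp}_{2g}(\bz/2\bz)$ with $M^2 = 0$, I would fix an arbitrary lift $\wt M \in \on{Mat}_{2g \times 2g}(\bz/8\bz)$. Because $M$ lies in the Lie algebra mod $2$, the element $\id_{2g} + 2\wt M$ automatically lies in $\Sp_{2g}(\bz/4\bz) = H(4)$, so it is the mod-$4$ reduction of some $\id_{2g} + 2\wt M + 4V \in H(8)$. Squaring this element yields
\[
(\id_{2g} + 2\wt M + 4V)^2 \equiv \id_{2g} + 4\wt M + 4\wt M^2 \equiv \id_{2g} + 4M \pmod{8},
\]
where the final congruence uses $M^2 = 0$ in $\mf{sp}_{2g}(\bz/2\bz)$. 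Repeating the three-case analysis of Lemma~\ref{proposition:lifting-l} (taking two of the blocks $A, B, C$ to vanish) then shows that $H(8) \supseteq \id_{2g} + 4W$, where $W \subset \mf{sp}_{2g}(\bz/2\bz)$ is the codimension-one subspace cut out by $\on{tr} A = 0$ on the top-left block. One small point to verify: the ``standard fact'' that $\{A \in \on{Mat}_{g \times g}(\FF_2) : A^2 = 0\}$ spans the trace-zero matrices still holds in characteristic $2$; indeed, the off-diagonal $E_{ij}$ span the off-diagonal part, while the rank-one symmetric matrices $(e_i + e_j)(e_i + e_j)^T$ satisfy $A^2 = 0$ (as $(e_i + e_j)^T(e_i + e_j) \equiv 0 \pmod 2$) and, together with the $E_{ij}$, generate the diagonal combinations $E_{ii} + E_{jj}$.

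Finally, if the inclusion $\id_{2g} + 4W \subseteq H(8) \cap \ker(\Sp_{2g}(\bz/8\bz) \twoheadrightarrow \Sp_{2g}(\bz/4\bz))$ is strict, then the right-hand side equals $\id_{2g} + 4 \mf{sp}_{2g}(\bz/2\bz)$ and $H(8) = \Sp_{2g}(\bz/8\bz)$ as desired. Otherwise $H(8)$ has index $2$ in $\Sp_{2g}(\bz/8\bz)$, and I would derive a contradiction via Proposition~\ref{corollary:finite-commutator}. For $g \geq 3$ this is immediate, since $\Sp_{2g}(\bz/8\bz)^{\ab}$ is trivial and no index-$2$ subgroup exists. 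The main obstacle is the case $g = 2$, where $\Sp_4(\bz/8\bz)^{\ab} \cong \bz/2\bz$ does admit an index-$2$ subgroup $K$; I would handle this by observing that the unique surjection $\Sp_4(\bz/8\bz) \twoheadrightarrow \bz/2\bz$ factors through the sign map $\Sp_4(\bz/2\bz) \cong S_6 \twoheadrightarrow \bz/2\bz$, so $K$ is the preimage of $A_6 \subset S_6$. Since $H(8)$ surjects onto all of $S_6$ by hypothesis, $H(8) \not\subseteq K$, giving the desired contradiction.
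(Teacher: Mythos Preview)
Your proof is correct and follows essentially the same approach as the paper's: both square an element of the form $\id_{2g}+2M+4V$ with $M^2=0$ to land in $\id_{2g}+4W$, then derive a contradiction from Proposition~\ref{corollary:finite-commutator}, handling $g=2$ via the observation that the unique index-$2$ subgroup is the preimage of $A_6\subset S_6$. Your explicit verification that square-zero matrices span the trace-zero subspace over $\FF_2$ is a nice addition that the paper leaves implicit.
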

	\begin{proof} 
		We modify the proof of Lemma~\ref{proposition:lifting-l}. As in that proof, we consider a matrix
		\[
			M  = \left[\begin{array}{c|c} A & B \\ \hline C & -A^T \end{array}\right] \in \mf{sp}_{2g}(\bz / 2 \ZZ) 
		\]
		with the property that $\id_{2g} + 2M$ lies in $\Sp_{2g}(\bz / 4 \ZZ)$ and $M^2 = 0$. This time, however, the first condition automatically holds because 
		\begin{align*} 
			(\id_{2g} + 2M)^T \Omega_{2g}(\id_{2g} + 2M) &\equiv \Omega_{2g} + 2(M^T \Omega_{2g} + \Omega_{2g}M) + 4M^T \Omega_{2g}M \\
			&\equiv \Omega_{2g} \pmod{4}.
		\end{align*} 
		Nevertheless, note that the second condition is again satisfied whenever $A^2 = 0$ and two of $A, B,$ and $C$ are zero. 
		
		Choose an arbitrary lift of $M$ in $\on{Mat}_{2g \times 2g}(\bz / 4 \ZZ)$, and by abuse of notation also refer to it as $M$. By assumption, $H(8)$ contains an element of the form $\id_{2g} + 2M + 4V$ for some $V \in \on{Mat}_{2g \times 2g}(\bz / 2 \ZZ)$, which means that $H(8)$ contains 
		\begin{align*} 
			(\id_{2g} + 2M + 4V)^2 &\equiv 
			\id_{2g} \mathbin{+} 4M \pmod{8}. 
		\end{align*} 
		Taking $W \subset \mf{sp}_{2g}(\bz / 2 \ZZ)$ to be the trace-zero subspace as before, this implies that 
		\[
			\id_{2g} \mathbin{+} 4 \cdot W \subset H(8) \cap \ker (\Sp_{2g}(\bz / 8 \ZZ) \twoheadrightarrow \Sp_{2g}(\bz / 4 \ZZ) ). 
		\]
		If the inclusion were strict, again the right-hand side would contain the kernel of reduction, so the desired result follows. Therefore, suppose the inclusion is an equality, so that $H(8)$ has index $2$ in $\Sp_{2g}(\bz / 8 \ZZ)$. If $g \ge 3$, Proposition~\ref{corollary:finite-commutator} tells us that $\Sp_{2g}(\bz / 8 \ZZ)^{\ab}$ is trivial, which is a contradiction. If $g = 2$, the same proposition tells us that $H(8) = [\Sp_4(\ZZ/ 8 \ZZ),\Sp_4(\ZZ/8\ZZ)]$, so since the image of this commutator under the abelianization map
		\[
			\Sp_{4}(\bz / 8 \ZZ) \twoheadrightarrow \Sp_{4}(\bz / 2 \ZZ) \twoheadrightarrow \ZZ/2\ZZ
		\]
        is trivial, $H(8)$ cannot surject onto $\Sp_{4}(\bz / 2\ZZ)$, which is again a contradiction. 
	\end{proof} 
	
	\subsection{Lifting from 2 to 4 and from 3 to 9}\label{thisishisbananas}

	\begin{proposition} \label{proposition:lifting-2}
		Fix $g \ge 2$. The following statements hold: 
		\begin{enumerate} 
			\item Take $\ell = 2$. Then $H(2) = \Sp_{2g}(\bz / 2 \ZZ)$ implies that $H(4) = \Sp_{2g}(\bz / 4\ZZ)$. 
			\item Take $\ell = 3$. Then $H(3) = \Sp_{2g}(\bz / 3 \ZZ)$ implies that $H(9) = \Sp_{2g}(\bz / 9 \ZZ)$. 
		\end{enumerate} 
	\end{proposition}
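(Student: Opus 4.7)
The plan is to mimic the strategy of Lemmas~\ref{proposition:lifting-l} and~\ref{proposition:lifting-4}: take $\ell$-th powers of appropriate lifts in $H(\ell^2)$ to produce elements of $\id_{2g} + \ell \cdot \mf{sp}_{2g}(\bz/\ell\ZZ)$, and then rule out the remaining codimension using the abelianization computation in Proposition~\ref{corollary:finite-commutator}.

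For part (b) with $\ell = 3$, I would begin as in Lemma~\ref{proposition:lifting-l} by choosing $M \in \mf{sp}_{2g}(\bz/3\ZZ)$ from one of the three block-shaped families (strictly block upper-triangular with symmetric $B$, strictly block lower-triangular with symmetric $C$, or block-diagonal of the form $\left[\begin{array}{c|c} A & 0 \\ \hline 0 & -A^T \end{array}\right]$ with $A^2 = 0$). The essential refinement over Lemma~\ref{proposition:lifting-l} is to lift $M$ to $\on{Mat}_{2g \times 2g}(\bz/9\ZZ)$ so that $M^2 = 0$ holds identically, not just modulo $3$. This is immediate for the first two families; for the third, one writes any trace-zero nilpotent $A$ over $\bz/3\ZZ$ as a sum of basic matrices such as $E_{ij}$ (for $i \ne j$) and $\left[\begin{smallmatrix} 1 & -1 \\ 1 & -1 \end{smallmatrix}\right]$ (placed in arbitrary $2 \times 2$ subblocks), each of which squares to zero already over $\bz$. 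Given such an $M$ and the $V$ provided by assumption so that $\id_{2g} + M + 3V \in H(9)$, I would compute
\[
(\id_{2g} + M + 3V)^3 \equiv \id_{2g} + 3M + 3M^2 + 3(M^2 V + MVM + VM^2) + M^3 \pmod 9,
\]
which, using $M^2 = 0$, collapses to $\id_{2g} + 3(M + MVM) \pmod 9$. Running this over all $M$ in the three families, I would show that $H(9)$ contains $\id_{2g} + 3W$ for $W$ ranging over a subspace of $\mf{sp}_{2g}(\bz/3\ZZ)$ of codimension at most $1$; a strict codimension-$1$ situation would yield a surjection $\Sp_{2g}(\bz/9\ZZ) \twoheadrightarrow \bz/3\ZZ$, contradicting Proposition~\ref{corollary:finite-commutator}.

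Part (a) with $\ell = 2$ should be handled analogously via squaring. Choosing $M \in \mf{sp}_{2g}(\bz/2\ZZ)$ with $M^2 = 0$, lifted so that $M^2 = 0$ identically over $\bz/4\ZZ$, the identity
\[
(\id_{2g} + M + 2V)^2 \equiv \id_{2g} + 2M + M^2 + 2(MV + VM) \equiv \id_{2g} + 2(M + MV + VM) \pmod 4
\]
feeds into the same block-by-block framework. For $g \ge 3$, Proposition~\ref{corollary:finite-commutator} immediately eliminates an index-$2$ subgroup; for $g = 2$, one imports the commutator-image argument from Lemma~\ref{proposition:lifting-4}, where any hypothetical index-$2$ subgroup $H(4) \subsetneq \Sp_4(\bz/4\ZZ)$ would coincide with $[\Sp_4(\bz/4\ZZ), \Sp_4(\bz/4\ZZ)]$, whose image in $\Sp_4(\bz/2\ZZ) \cong S_6$ lies in $A_6$, contradicting the assumption that $H(2) = \Sp_4(\bz/2\ZZ)$.

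The main obstacle will be the analysis of the correction terms $MVM$ (for $\ell = 3$) and $MV + VM$ (for $\ell = 2$), which do not vanish automatically as they do for $\ell \ge 5$. Since $V$ is forced on us by the ambient lift in $H(\ell^2)$, we cannot simply set $V = 0$. The delicate point is to show that, as $M$ varies through each of the three block families, the collection of correction terms covers enough of $\mf{sp}_{2g}(\bz/\ell\ZZ)$ that the aggregate shortfall has codimension at most $1$, which is exactly the slack that is then absorbed by the abelianization bound.
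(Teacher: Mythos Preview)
Your computations of the $\ell$-th powers are correct, but the proposal has a genuine gap precisely where you flag it. The element you land on in $\ker\phi_{\ell,2g}\cap H(\ell^2)$ is $M+MVM$ (for $\ell=3$) or $M+MV+VM$ (for $\ell=2$), where $V$ depends on both $M$ and the unknown group $H_\ell$. You then assert that, as $M$ runs over the three block families, these elements span a subspace of $\mf{sp}_{2g}(\bz/\ell\bz)$ of codimension at most one; but no mechanism is offered for this, and it is the entire content of the proposition. For a concrete obstruction: with $\ell=3$ and $M$ supported in a single entry of the $B$-block, the correction $MVM$ lands in the same one-dimensional line as $M$ and may cancel it, so that particular $M$ yields nothing. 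More structurally, $H(\ell^2)\cap\ker\phi_{\ell,2g}$ is automatically an $\Sp_{2g}(\bz/\ell\bz)$-submodule of $\mf{sp}_{2g}(\bz/\ell\bz)$, and for $\ell\in\{2,3\}$ this module is \emph{not} irreducible (e.g.\ in characteristic $2$ the scalars form a submodule), so there exist proper invariant subspaces of codimension greater than one that your abelianization bound cannot exclude. Without an analysis of this submodule lattice, or some other device that pins down the span of the corrected elements, the argument does not close.

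The paper's proof takes a completely different route: induction on $g$, with the base case $g=2$ handled by a direct \texttt{Magma} enumeration of the maximal subgroups of $\Sp_4(\bz/\ell^2\bz)$. For $g\ge 3$ one embeds $\Sp_{2g-2}(\bz/\ell\bz)\hookrightarrow\Sp_{2g}(\bz/\ell\bz)$ block-diagonally and builds a projection $\pi$ from the $\phi_{\ell,2g}$-preimage of this subgroup onto $\Sp_{2g-2}(\bz/\ell^2\bz)$; the inductive hypothesis forces $\pi$ to be surjective on $H(\ell^2)$. From this one extracts, by explicit commutator and conjugation manipulations, a single concrete nonzero element of $\ker\phi_{\ell,2g}\cap H(\ell^2)$ supported in a $4\times 4$ block. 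A second \texttt{Magma} check then verifies that the $\Sp_4(\bz/\ell\bz)$-conjugates of this specific element span all of $\mf{sp}_4(\bz/\ell\bz)$, and permuting the $2\times 2$ blocks carries this to all of $\mf{sp}_{2g}(\bz/\ell\bz)$. The key difference from your plan is that the paper never tries to bound the codimension of an uncontrolled span; it produces one explicit kernel element whose conjugates are known (by finite computation) to generate everything.
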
 

\subsubsection*{Idea of Proof}
The argument will proceed by induction on $g$. We start by verifying
the base case $g = 2$ in Lemma~\ref{lemma:base-case-low-lifting}.
We then inductively assume this holds for $g-1$ and prove it for $g$.
We use the inductive hypothesis to construct a particular element lying
in $H(\ell^2)$ in Lemma~\ref{lemma:particular-matrices-in-preimage}.
Then, we use Lemma~\ref{lemma:code} to show that conjugates of this
particular element generate $\mf{sp}_{4}(\mathbb Z/ \ell^2 \ZZ)$, embedded
as in ~\eqref{abovesubspace}. We finally translate around this
copy of $\mf{sp}_{4}(\mathbb Z/ \ell^2 \ZZ)$ to obtain that $H(\ell^2) = \Sp_{2g}(\mathbb Z/\ell^2 \ZZ)$.

The following lemma deals with the base case:
\begin{lemma}
	\label{lemma:base-case-low-lifting}
	Proposition~\ref{proposition:lifting-2} holds
	in the case that 
	$g = 2$.
\end{lemma}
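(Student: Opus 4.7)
My plan is to prove Lemma~\ref{lemma:base-case-low-lifting} by adapting the proof of Lemma~\ref{proposition:lifting-l} to the small-prime cases $\ell \in \{2,3\}$ with $g = 2$, handling the extra terms that appear when taking $\ell$-th powers via the $\Sp_4(\bz/\ell\ZZ)$-module structure on $H(\ell^2) \cap \ker$ and closing the argument using the abelianization result Proposition~\ref{corollary:finite-commutator}.

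Exactly as in Lemma~\ref{proposition:lifting-l}, I would pick $\bar M \in \mf{sp}_4(\bz/\ell\ZZ)$ from the three block-form families (with $\bar M^2 = 0$ and at most one of the blocks $A, B, C$ nonzero; in the diagonal case, additionally $A^2 = 0$), choose a lift $M \in \on{Mat}_{4 \times 4}(\bz/\ell^2\ZZ)$ with $M^2 = 0$ exactly (possible for these block shapes), and write an element of $H(\ell^2)$ above $\id_4 + \bar M$ as $\id_4 + M + \ell V$. Expanding $(\id_4 + M + \ell V)^\ell$ modulo $\ell^2$ yields
\[
(\id_4 + M + \ell V)^\ell \equiv \id_4 + \ell \bar M + \ell R_\ell(\bar M, V) \pmod{\ell^2},
\]
where the small-prime correction is $R_3(\bar M, V) = \bar M V \bar M$ (arising from the $\binom{3}{3}$-term) for $\ell = 3$, and $R_2(\bar M, V) = \bar M V + V \bar M$ for $\ell = 2$. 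To absorb these corrections, I would use the crucial fact that $H(\ell^2) \cap \ker$ is an $\Sp_4(\bz/\ell\ZZ)$-submodule of $\mf{sp}_4(\bz/\ell\ZZ)$ under the adjoint action: conjugation of a kernel element by any lift of $g \in \Sp_4(\bz/\ell\ZZ)$ in $H(\ell^2)$ is well-defined and factors through $\on{Ad}(g)$. Combining the elements produced from the upper-triangular and lower-triangular block families (whose corrections stay within the same family for $\ell = 3$) with the elements from the diagonal family and their $\Sp_4$-conjugates, a direct check shows that the resulting $\Sp_4(\bz/\ell\ZZ)$-submodule of $H(\ell^2) \cap \ker$ contains $\id_4 + \ell W$, where $W \subset \mf{sp}_4(\bz/\ell\ZZ)$ is the codimension-one trace-zero subspace; the stray ``off-diagonal'' contributions in the diagonal-family correction are cancelled using elements from the upper- and lower-triangular families.

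With $\id_4 + \ell W \subseteq H(\ell^2) \cap \ker$ established, the endgame of Lemma~\ref{proposition:lifting-l} applies verbatim: either the inclusion is strict (in which case $H(\ell^2) = \Sp_4(\bz/\ell^2\ZZ)$), or $H(\ell^2)$ has index $\ell$ in $\Sp_4(\bz/\ell^2\ZZ)$. For $\ell = 3$, the latter contradicts the triviality of $\Sp_4(\bz/9\ZZ)^{\ab}$ from Proposition~\ref{corollary:finite-commutator}. For $\ell = 2$, where $\Sp_4(\bz/4\ZZ)^{\ab} \cong \bz/2\ZZ$, the unique index-$2$ subgroup is the commutator subgroup of $\Sp_4(\bz/4\ZZ)$, whose reduction modulo $2$ is the alternating group $A_6 \subsetneq S_6 \cong \Sp_4(\bz/2\ZZ)$, contradicting the hypothesis $H(2) = \Sp_4(\bz/2\ZZ)$ exactly as at the end of the proof of Lemma~\ref{proposition:lifting-4}. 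I anticipate the main obstacle to be the $\ell = 2$ half of the middle step: the correction $\bar M V + V \bar M$ fundamentally couples $\bar M$ with the uncontrolled lift parameter $V$, so showing that the $\Sp_4(\bz/2\ZZ)$-submodule generated in $H(4) \cap \ker$ contains $W$ will require careful use of the submodule structure of $\mf{sp}_4(\bz/2\ZZ)$ as an $\Sp_4(\bz/2\ZZ)$-module.
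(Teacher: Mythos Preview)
Your approach is genuinely different from the paper's: the paper simply dispatches the base case $g=2$ for $\ell\in\{2,3\}$ by a \texttt{Magma} computation that enumerates the maximal subgroups of $\Sp_4(\bz/\ell^2\ZZ)$ and checks that none surjects onto $\Sp_4(\bz/\ell\ZZ)$. Your plan to adapt the $\ell\ge 5$ argument is more conceptual and, if it worked, would be preferable; but as written it has a real gap at the step you label ``a direct check.''

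The difficulty is that the elements you produce in $H(\ell^2)\cap\ker$ are $\id_4+\ell(\bar M+R_\ell(\bar M,V))$, and the correction $R_\ell$ depends on the \emph{uncontrolled} lift parameter $V$. Nothing in your argument rules out the possibility that $\bar M+R_\ell(\bar M,V)=0$ for every choice of $\bar M$ you make. Concretely, for $\ell=3$ with $\bar M=\left[\begin{smallmatrix}0&B\\0&0\end{smallmatrix}\right]$ you get the upper block $B+BV_3B$, and if $H(9)\cap\ker=0$ then the unique lift in $H(9)$ of $\id_4+\bar M$ must have order $3$, which forces $B+BV_3B=0$ automatically; so your construction yields the identity and no information. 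In other words, the ``direct check'' is not a check at all: it is equivalent to proving that the extension
\[
1\longrightarrow \mf{sp}_4(\bz/\ell\ZZ)\longrightarrow \Sp_4(\bz/\ell^2\ZZ)\longrightarrow \Sp_4(\bz/\ell\ZZ)\longrightarrow 1
\]
does not split, which is exactly the content of the lemma. (Recall that for $g=1$ and $\ell\in\{2,3\}$ this extension \emph{does} split, so any purported argument must use $g\ge 2$ in an essential way at this point; yours does not.) You correctly flag this obstruction for $\ell=2$, but it is just as present for $\ell=3$.

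Your endgame via Proposition~\ref{corollary:finite-commutator} is fine once you know $\id_4+\ell W\subset H(\ell^2)$, and your idea of exploiting the $\Sp_4(\bz/\ell\ZZ)$-module structure of $H(\ell^2)\cap\ker$ is the right one. A salvage would require either (i) an independent proof that the extension above is non-split for $g=2$ and $\ell\in\{2,3\}$ (e.g.\ a cohomological or order-of-element argument), together with the observation that $\mf{sp}_4(\bz/\ell\ZZ)$ has no proper nonzero $\Sp_4(\bz/\ell\ZZ)$-submodule whose quotient is trivial as needed; or (ii) an explicit element-by-element calculation showing some specific commutator or power in $H(\ell^2)$ lands nontrivially in the kernel regardless of $V$. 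Either route is substantially more work than what you have written, and the paper sidesteps all of it with the computer check.
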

\begin{proof}
		Let $\ell \in \{2,3\}$.	We proceed by induction on $g$. The following \texttt{Magma} code verifies the base cases for $g = 2$,
		that the only subgroup
of $\Sp_4(\mathbb Z/ \ell^2 \ZZ)$ surjecting onto $\Sp_4(\mathbb Z/ \ell \ZZ)$ is all of
$\Sp_4(\mathbb Z/\ell^2 \ZZ)$.
\vspace*{0.1in}
\begin{adjustwidth}{0.5in}{0in}
\begin{alltt}
for l in [2,3] do
    Z := Integers();
    G := GL(4,quo<Z|l*l>);
    A := elt<G | 1,0,0,0, 1,-1,0,0, 0,0,1,1, 0,0,0,-1>;
    B := elt<G| 0,0,-1,0, 0,0,0,-1, 1,0,1,0, 0,1,0,0>;
    H := sub<G|A,B>;
    maximals := SubgroupClasses(H: Al := "Maximal");
    S := quo<Z|l>;
    grp, f := ChangeRing(G, S);
    for H in maximals do
        if #f(H{\`{}}subgroup) eq #Sp(4,l) then
            assert false;
        end if;
    end for;
end for; 
\end{alltt}
\end{adjustwidth}
\vspace*{0.1in}
This concludes the proof of Lemma~\ref{lemma:base-case-low-lifting}.
\end{proof}

In order to handle the inductive step of Proposition~\ref{proposition:lifting-2}, we first introduce some notation.

Let 
		\(
		\phi_{\ell,2g} \colon \Sp_{2g}(\bz / \ell^2 \ZZ) \twoheadrightarrow \Sp_{2g}(\bz / \ell \ZZ)
		\)
		denote the usual reduction map. 
Our next aim is to define the maps $\pi, \iota_\ell$
in the diagram 
		\begin{equation}\label{roarrrr}
		\begin{tikzcd}[row sep=tiny]
		& \phi_{\ell,2g}^{-1}(\Sp_{2g-2}(\bz/ \ell \ZZ)) \ar[hookrightarrow]{r} \ar[twoheadrightarrow]{dd} \ar[twoheadrightarrow, swap]{ld}{\pi} & \Sp_{2g}(\bz / \ell^2 \ZZ) \ar[twoheadrightarrow]{dd}{\phi_{\ell,2g}} \\
      	\Sp_{2g-2}(\bz / \ell^2 \ZZ) \ar[twoheadrightarrow, swap]{rd}{\phi_{\ell,2g-2}} \\
		& \Sp_{2g-2}(\bz/\ell \ZZ) \ar[hookrightarrow, swap]{r}{\iota_\ell} & \Sp_{2g}(\bz/\ell \ZZ)
		\end{tikzcd}
		\end{equation}
The map $\iota_\ell$ will be defined in~\eqref{iota-definition}
and the map $\pi$ will be defined in~\eqref{pi-definition}
For the present purpose, it is convenient to use a different $\Omega$-matrix, which we shall denote by $J_{2g}$, in the definition of symplectic group $\Sp_{2g}$ and its Lie algebra $\mf{sp}_{2g}$. Inductively define
		\[
			J_2 \defeq \left[\begin{array}{cc} 0 & 1 \\ -1 & 0 \end{array}\right] \quad \text{and} \quad 	J_{2g} \defeq \left[\begin{array}{c|c} 
				J_{2g-2} & 0 \\ \hline 
				0 & J_2
				\end{array}\right]. 
		\]
		The matrix $J_{2g}$ is a block-diagonal matrix with each block being a copy of $J_2$. 
        
        Now let $M \in \Sp_{2g-2}(\bz / \ell^2 \ZZ)$. The map 
		\[
			M \mapsto \left[\begin{array}{c|c} M & 0 \\ \hline 0 & \id_2 \end{array}\right] 
		\]
		gives an inclusion 
		\[
			\iota_{\ell^2} \colon \Sp_{2g-2}(\bz /\ell^2 \ZZ) \hookrightarrow \Sp_{2g}(\bz / \ell^2 \ZZ)
		\]
		which, when taken modulo $\ell$, reduces to an inclusion 
		\begin{align}
			\label{iota-definition}
\iota_{\ell} \colon \Sp_{2g-2}(\bz / \ell \ZZ) \hookrightarrow \Sp_{2g}(\bz / \ell \ZZ). 
		\end{align}
		Thus, the group $\phi_{\ell,2g}^{-1}(\Sp_{2g-2}(\bz / \ell \ZZ))$ (see the diagram in~\eqref{roarrrr}) consists of matrices satisfying 
		\[
			\left[\begin{array}{c|c} 
				A_{(2g-2) \times (2g-2)} & B_{(2g-2) \times 2} \\ \hline
				C_{2\times(2g-2)} & D_{2 \times 2}
			\end{array}\right] \equiv 
			\left[\begin{array}{c|c}
				M & 0 \\ \hline 
				0 & \id_{2} 
			\end{array}\right] \pmod{\ell}
		\]
		where $M \in \Sp_{2g-2}(\bz / \ell \ZZ)$ and the blocks $A,B,C,D$ have the indicated sizes. For two such matrices, we have 
		\begin{align*} 
			\left[\begin{array}{c|c} 
			A_1 & B_1 \\ \hline
			C_1 & D_1
			\end{array}\right] \cdot
			\left[\begin{array}{c|c} 
			A_2 & B_2 \\ \hline
			C_2 & D_2
			\end{array}\right]
			&\equiv  
			\left[\begin{array}{c|c} 
			A_1A_2 + B_1C_2 & A_1B_2 + B_1D_2 \\ \hline
			C_1A_2 + D_1C_2 & C_1B_2 + D_1D_2
			\end{array}\right] \\
			&\equiv 
			\left[\begin{array}{c|c} 
			A_1A_2 & A_1B_2 + B_1D_2 \\ \hline
			C_1A_2 + D_1C_2 & D_1D_2
			\end{array}\right] \pmod{\ell^2},
		\end{align*} 
		where the last step follows because $B_1C_2 \equiv C_1B_2 \equiv 0 \pmod{\ell^2}$. Therefore, the map 
		\begin{align}
			\label{pi-definition}	
			\pi \colon \phi_{\ell,2g}^{-1}(\Sp_{2g-2}(\bz / \ell \ZZ)) \twoheadrightarrow \Sp_{2g-2}(\bz / \ell^2 \ZZ) \quad \text{sending} \quad \left[\begin{array}{c|c} A & B \\ \hline C & D \end{array}\right] \mapsto A
		\end{align}
		is a group homomorphism. This completes the definition of the maps $\iota_\ell$ and $\pi$, and it is apparent that the diagram in~\eqref{roarrrr} commutes. 
		
With this notation set, we now continue our proof of
Proposition~\ref{proposition:lifting-2}.
In Lemma~\ref{lemma:particular-matrices-in-preimage},
we show via explicit matrix multiplication that one of two particular matrices lies in
$H(\ell^2)$.
\begin{lemma}
	\label{lemma:particular-matrices-in-preimage}
	Suppose that
		\(
			\pi(H(\ell^2) \cap \phi_{\ell,2g}^{-1}(\Sp_{2g-2}(\bz / \ell \ZZ))) = \Sp_{2g-2}(\bz / \ell^2 \ZZ)
		\)
		and $H(\ell) = \Sp_{2g}(\mathbb Z/\ell \ZZ)$.
		Then, defining
		\begin{align} 
		\label{droptotheground}	\matone&\defeq \id_{2g} + \ell \cdot \left[\begin{array}{c|c} \begin{array}{cccc} 
			-1 & -1 & 0 & 0 \\ 
			0 & 1 & 0 & 0 \\
			0 & 0 & 0 & 0 \\
			0 & 0 & 0 & 0 
			\end{array} & 0_{4 \times (2g-4)} \\ \hline
			0_{(2g-4) \times 4} & 0_{(2g-4) \times (2g-4)}
			\end{array}\right] \quad \text{and} \\
		\label{pushofabutton}	\mattwo& \defeq \id_{2g} + \ell \cdot \left[\begin{array}{c|c} \begin{array}{cccc} 
			-1 & -1 & 1 & 0 \\ 
			0 & 1 & 0 & 0 \\
			0 & 0 & 0 & 0 \\
			0 & -1 & 0 & 0 
			\end{array} & 0_{4 \times (2g-4)} \\ \hline
			0_{(2g-4) \times 4} & 0_{(2g-4) \times (2g-4)}
			\end{array}\right],
		\end{align} 
we have that either $\matone$ or $\mattwo$ lies in $H(\ell^2)$.	
\end{lemma}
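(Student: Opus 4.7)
The plan is to apply the hypothesis $\pi(H(\ell^2) \cap \phi_{\ell,2g}^{-1}(\Sp_{2g-2}(\bz/\ell\ZZ))) = \Sp_{2g-2}(\bz/\ell^2\ZZ)$ to produce an element $N \in H(\ell^2)$ whose top-left $(2g-2) \times (2g-2)$ block matches that of $\matone$, and then to modify $N$ (using a further application of the hypotheses) until its auxiliary blocks vanish or acquire exactly the shape of $\mattwo$. First, by hypothesis (1), choose $N \in H(\ell^2) \cap \phi_{\ell,2g}^{-1}(\Sp_{2g-2}(\bz/\ell\ZZ))$ with $\pi(N) = \pi(\matone)$. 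Since $\pi(\matone) \equiv \id_{2g-2} \pmod \ell$ and $\phi_{\ell,2g}(N)$ lies in $\iota_\ell(\Sp_{2g-2}(\bz/\ell\ZZ))$ with top-left block $\pi(N) \bmod \ell = \id_{2g-2}$, we deduce $\phi_{\ell,2g}(N) = \id_{2g}$. Hence $N = \id_{2g} + \ell X$ with $X \in \mf{sp}_{2g}(\bz/\ell\ZZ)$; writing $X = \left[\begin{smallmatrix} \alpha & E \\ F & G \end{smallmatrix}\right]$ in the $(2g-2)+2$ block decomposition, the block $\alpha$ is pinned down by $\pi(\matone)$, the symplectic condition forces $F = J_2 E^T J_{2g-2}$ and $G \in \mf{sp}_2(\bz/\ell\ZZ)$, so the only free data are $E$ (a $(2g-2) \times 2$ matrix) and $G$ (in $\mf{sp}_2$).

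Next, the goal is to force $E$ and $G$ into one of two prescribed configurations. Applying hypothesis (1) a second time (now with $\pi$-image $\id_{2g-2}$) yields an element $N_0 = \id_{2g} + \ell X_0 \in H(\ell^2)$ whose top-left block is zero; the product $N \cdot N_0^{-1} \in H(\ell^2)$ then equals $\id_{2g} + \ell(X - X_0)$, preserving the top-left block $\alpha$ but shifting the auxiliary data. Using hypothesis (2), we further conjugate $N$ and $N_0$ by lifts $\tilde h \in H(\ell^2)$ of elements $h \in \Sp_{2g}(\bz/\ell\ZZ)$; such conjugation acts on $X$ and $X_0$ by the adjoint representation of $h$ on $\mf{sp}_{2g}(\bz/\ell\ZZ)$. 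Restricting to conjugators in the stabilizer of $\alpha$ and combining with the correction by $N_0$, a direct block computation should show that the auxiliary data $(E, G)$ can be driven to at most two possible configurations: the trivial one ($E = G = 0$) giving $\matone$, and the specific nonzero pattern at positions $(1,3)$ and $(4,2)$ giving $\mattwo$.

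The main obstacle is the explicit matrix multiplication and case analysis needed to establish the two-element dichotomy. One must (i) carefully track the symplectic constraints under $J_{2g} = \mathrm{diag}(J_{2g-2}, J_2)$ through every product and conjugation, (ii) compute the action of the stabilizer of $\alpha$ on the space of admissible $(E, G)$, and (iii) verify that no further modification collapses the two orbits into one (since otherwise we could always reach $\matone$ and the lemma would unnecessarily allow $\mattwo$). I expect the specific structure of $Y_4$ and the particular positions of the $\pm 1$ entries in $\mattwo - \matone$—which together form a single element of $\mf{sp}_{4}(\bz/\ell\ZZ)$ under the relation $M_{21} = J_2 M_{12}^T J_2$—to be exactly what distinguishes the two possible orbits of the stabilizer.
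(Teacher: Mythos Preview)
Your plan has a genuine gap, and it stems from the two ``correction'' moves you propose.

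First, the correction by $N_0$ does not do what you want. The hypothesis only tells you that $\pi$ restricted to $H(\ell^2)\cap\phi_{\ell,2g}^{-1}(\Sp_{2g-2}(\bz/\ell\ZZ))$ is \emph{surjective}; it gives no information about the fiber over $\id_{2g-2}$. In particular, the only element of that fiber you know lies in $H(\ell^2)$ is $\id_{2g}$ itself, so multiplying by $N_0^{-1}$ cannot be assumed to shift $(E,G)$ in any prescribed direction. Second, conjugation by stabilizers of $\alpha$ alone cannot normalize the bottom-right block $G$: the adjoint action of $\Sp_2(\bz/\ell\ZZ)$ on $\mf{sp}_2(\bz/\ell\ZZ)$ preserves the determinant, so a nonzero $G$ with nonzero determinant can never be conjugated to $0$. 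Likewise, the off-diagonal block $E$ is a full $(2g-2)\times 2$ matrix, and the centralizer of $\alpha$ in $\Sp_{2g-2}(\bz/\ell\ZZ)$ is far too small to force rows $3$ through $2g-2$ of $E$ to vanish. So neither move gets you from an arbitrary $(E,G)$ down to the two configurations you need.

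The paper avoids this entirely with a commutator trick. It does \emph{not} start with an element whose top-left block already equals $\alpha$; instead it pulls back the nilpotent element with top-left $2\times 2$ block $\left[\begin{smallmatrix}0&0\\1&0\end{smallmatrix}\right]$, obtaining some $\id_{2g}+\ell U\in H(\ell^2)$ with unknown garbage $A,A',B,B',C$ in the last two rows and columns. It then forms the commutator $(\id_{2g}+\ell U)^{-1}M^{-1}(\id_{2g}+\ell U)M$ with the specific unipotent $M=\left[\begin{smallmatrix}1&1\\0&1\end{smallmatrix}\right]\oplus\id_{2g-2}$. Because $M$ is the identity outside the first two coordinates, this commutator annihilates $B$, $B'$, and $C$ outright and replaces $A$ by $\left[\begin{smallmatrix}0&-1\\0&0\end{smallmatrix}\right]A_{2\times 2}$, while producing exactly $\left[\begin{smallmatrix}-1&-1\\0&1\end{smallmatrix}\right]$ in the top-left. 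A further conjugation by $\id_{2g-2}\oplus M_{2\times 2}$ with $M_{2\times 2}\in\Sp_2(\bz/\ell\ZZ)$ then normalizes the single surviving row of $A_{2\times 2}$ to either $(0,0)$ or $(1,0)$, yielding precisely the dichotomy $\matone$ versus $\mattwo$. The commutator step is the key idea you are missing: it is what kills the uncontrolled blocks without needing to know anything about $\ker\pi\cap H(\ell^2)$.
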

\begin{proof}
Since we are assuming
		\(
			\pi(H(\ell^2) \cap \phi_{\ell,2g}^{-1}(\Sp_{2g-2}(\bz / \ell \ZZ))) = \Sp_{2g-2}(\bz / \ell^2 \ZZ)
		\),
\mbox{it follows that}
		\[
			\id_{2g-2} + 
			\ell \cdot \left[\begin{array}{c|c} 
				\begin{array}{cc} 0 & 0 \\ 1 & 0 \end{array} & 0_{2 \times (2g-4)} \\\hline
				0_{(2g-4) \times 2} & 0_{(2g-4) \times (2g-4)}
			\end{array}\right]
		\]
		lies in $\pi(H(\ell^2) \cap \phi_{\ell,2g}^{-1}(\Sp_{2g-2}(\bz / \ell \ZZ)))$.
        It follows that $H(\ell^2)$ contains an element of the form $M = \id_{2g} + 
			\ell U$ for
		\[
			U = \left[\begin{array}{c|c|c} 
			\begin{array}{cc} 0 & 0 \\ 1 & 0 \end{array} & 0_{2 \times (2g-4)} & A_{2 \times 2} \\ \hline
			0_{(2g-4) \times 2} & 0_{(2g-4) \times (2g-4)} & B_{(2g-4) \times 2} \\ \hline
			A'_{2 \times 2} & B'_{2 \times (2g-4)} & C_{2 \times 2}
			\end{array}\right], 
		\]
		where there is a linear relation between $A_{2 \times 2}$ and $A'_{2 \times 2}$, as well as a linear relation between $B_{(2g-4) \times 2}$ and $B'_{2 \times (2g-4)}$, imposed by the symplectic constraint $M^T J_{2g} M = J_{2g}$. For any $M \in H(\ell^2)$, the group $H(\ell^2)$ also contains 
		\[
			M^{-1}(\id_{2g} + \ell U)M = \id_{2g} + \ell M^{-1} U M, 
		\]
		where the right-hand-side only depends on the reduction of $M$ modulo $\ell$. Since $H(\ell^2)$ surjects onto $\Sp_{2g}(\bz / \ell \ZZ)$, the matrix $M \pmod{\ell}$ ranges over all elements of $\Sp_{2g}(\bz / \ell \ZZ)$. With this in mind, take a matrix $M$ given by
		\begin{align*} 
			M &\defeq
			\left[\begin{array}{c|c|c}
				\begin{array}{cc} 1 & 1 \\ 0 & 1 \end{array} & 0_{2 \times (2g-4)} & 0_{2\times2} \\ \hline
				0_{(2g-4) \times 2} & \id_{2g-4} & 0_{(2g-4) \times 2} \\ \hline
				0_{2 \times 2} & 0_{2 \times (2g-4)} & \id_2 
			\end{array}\right] \pmod{\ell}
		\end{align*} 
		so that $H(\ell^2)$ contains 
	\begin{align}
			\label{conjugated-matrix-for-lifting}
			M^{-1}(\id_{2g} + \ell U) M = 
			\id_{2g} + 
			\ell \cdot 
			\left[\begin{array}{c|c|c} 
			\begin{array}{cc} -1 & -1 \\ 0 & 1 \end{array} & 0_{2 \times (2g-4)} & \left[\begin{array}{cc} 0 & -1 \\ 0 & 0 \end{array}\right] \cdot A_{2 \times 2}  \rule[-3.75ex]{0pt}{0pt} \\  \hline
			0_{(2g-4) \times 2} & 0_{(2g-4) \times (2g-4)} & 0_{(2g-4)
        \times 2} \\ \hline \rule{0pt}{5ex}  
			 A'_{2 \times 2} \cdot
      \left[\begin{array}{cc} 0 & 1 \\ 0 & 0 \end{array}\right] & 0_{2
        \times (2g-4)} & 0_{2 \times 2}
			\end{array}\right]. 
		\end{align}
		Multiplying~\eqref{conjugated-matrix-for-lifting} on the left by 
		\(
			(\id_{2g} + \ell U)^{-1} \equiv \id_{2g} - \ell U \pmod{\ell^2}
		\)
		shows that $H(\ell^2)$ contains 
\begin{align*}
				N &\defeq (\id_{2g} + \ell U)^{-1}M^{-1}(\id_{2g} + \ell U) M \\
			&= \id_{2g} + 
			\ell \cdot 
			\left[\begin{array}{c|c|c} 
			\begin{array}{cc} -1 & -1 \\ 0 & 1 \end{array} & 0_{2 \times (2g-4)} & \left[\begin{array}{cc} 0 & -1 \\ 0 & 0 \end{array}\right] \cdot A_{2 \times 2} \rule[-3.75ex]{0pt}{0pt} \\   \hline
			0_{(2g-4) \times 2} & 0_{(2g-4) \times (2g-4)} & 0_{(2g-4)
        \times 2} \\ \hline \rule{0pt}{5ex}  
			 A'_{2 \times 2} \cdot
      \left[\begin{array}{cc} 0 & 1 \\ 0 & 0 \end{array}\right] & 0_{2
        \times (2g-4)} & 0_{2 \times 2}
			\end{array}\right]. 
\end{align*}
Conjugating $N$ by any matrix of the form
		\[
		 P \defeq \left[\begin{array}{c|c|c} 
			\id_2 & 0_{2 \times (2g-4)} & 0_{2 \times 2} \\ \hline
			0_{(2g-4) \times 2} & \id_{2g-4} & 0_{(2g-4) \times 2} \\ \hline
			0_{2 \times 2} & 0_{2 \times (2g-4)} & M_{2 \times 2} 
			\end{array}\right] \pmod{\ell},
		\]
where $M_{2 \times 2} \in \Sp_2(\mathbb Z/\ell^2 \ZZ)$,
		results in the matrix 
		\begin{align*}
			N' &\defeq P^{-1}NP \\
			&=
			\id_{2g} + 
			\ell \cdot 
			\left[\begin{array}{c|c|c} 
			\begin{array}{cc} -1 & -1 \\ 0 & 1 \end{array} & 0_{2 \times (2g-4)} & \left[\begin{array}{cc} 0 & -1 \\ 0 & 0 \end{array}\right] \cdot A_{2 \times 2} \cdot M_{2 \times 2} \rule[-3.75ex]{0pt}{0pt} \\  \hline
			0_{(2g-4) \times 2} & 0_{(2g-4) \times (2g-4)} & 0_{(2g-4)
        \times 2} \\ \hline \rule{0pt}{5ex}  
			M_{2 \times 2}^{-1} \cdot A'_{2 \times 2} \cdot
      \left[\begin{array}{cc} 0 & 1 \\ 0 & 0 \end{array}\right] & 0_{2
        \times (2g-4)} & 0_{2 \times 2}
			\end{array}\right].
		\end{align*}
By choosing $M_{2 \times 2}$ judiciously, we may arrange that        
\[\left[\begin{array}{cc} 0 & -1 \\ 0 & 0 \end{array}\right] \cdot A_{2\times 2} \cdot M_{2 \times 2} \quad = \quad \left[\begin{array}{cc} 1 & 0 \\ 0 & 0 \end{array}\right] \quad \text{or} \quad \left[\begin{array}{cc} 0 & 0 \\ 0 & 0 \end{array}\right], \]
depending on whether the bottom row of $A_{2 \times 2}$ is nonzero or zero, respectively. Upon conjugating by the matrix 
		\[
     \left[\begin{array}{c|c} \id_2 & 0_{2 \times (2g-2)} \\ \hline  0_{(2g-2) \times 2} & \begin{array}{c|c|c} 0_{2 \times 2} & 0_{2 \times (2g-6)} & \id_2 \\ \hline
			0_{(2g-6) \times 2} & \id_{2g-6} & 0_{(2g-6) \times 2} \\ \hline
			\id_2 & 0_{2 \times (2g-6)} & 0_{2 \times 2} \end{array} \end{array}\right] \in \Sp_{2g}(\bz / \ell \ZZ), 
		\]
		we conclude that $H(\ell^2)$ contains either $\matone$
		or $\mattwo$.
	\end{proof}
Next, in Lemma~\ref{lemma:code}, we show that we can conjugate
the matrices $\matone$ and $\mattwo$ from Lemma~\ref{lemma:particular-matrices-in-preimage} to obtain all of $\Sp_4(\mathbb Z/\ell \ZZ)$.
	\begin{lemma}\label{lemma:code}
		Let $\ell = 2$ or $3$. Let $\ol{\matone}$ and $\ol{\mattwo}$ denote the upper-left $4 \times 4$ blocks of $\matone$ and $\mattwo$, respectively. Then the sets 
		\begin{align*} 
\left\{  M^{-1} 
	\ol{\matone} M : M \in \Sp_{4}(\bz / \ell \ZZ) \right\} \quad \text{and}\quad
\left\{ M^{-1} 
	\ol{\mattwo} M : M \in \Sp_{4}(\bz / \ell \ZZ) \right\} 
		\end{align*} 
		are both spanning sets for $1 + \ell \cdot \mf{sp}_{4}(\bz / \ell \bz)$. 
	\end{lemma}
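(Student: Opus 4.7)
The plan is to reformulate the lemma as a claim about the adjoint action of $\Sp_4(\bz/\ell\ZZ)$ on $\mf{sp}_4(\bz/\ell\ZZ)$. Write $\ol{\matone} = \id_4 + \ell X_1$ and $\ol{\mattwo} = \id_4 + \ell X_2$, where $X_1, X_2 \in \mf{sp}_4(\bz/\ell\ZZ)$ are the Lie-algebra parts read off from~\eqref{droptotheground} and~\eqref{pushofabutton}. For any $M \in \Sp_4(\bz/\ell^2\ZZ)$, one has
\[
M^{-1}(\id_4 + \ell X_i) M \equiv \id_4 + \ell \cdot (M^{-1} X_i M) \pmod{\ell^2},
\]
and the right-hand side depends only on $M \bmod \ell$. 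Under the additive identification $1 + \ell \cdot \mf{sp}_4(\bz/\ell\ZZ) \cong \mf{sp}_4(\bz/\ell\ZZ)$ introduced in Section~\ref{subsection:stimpy}, the lemma therefore becomes the assertion that the $\Sp_4(\bz/\ell\ZZ)$-conjugation orbit of $X_i$ spans $\mf{sp}_4(\bz/\ell\ZZ)$ as a $\bz/\ell\ZZ$-module, for each $i \in \{1,2\}$.

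With this reformulation in place, the claim is a finite-dimensional linear algebra check: $\mf{sp}_4(\bz/\ell\ZZ)$ is a $10$-dimensional $\bz/\ell\ZZ$-vector space, and $\Sp_4(\bz/\ell\ZZ)$ is a finite group (of order $720$ for $\ell = 2$ and $51840$ for $\ell = 3$). I would verify the claim by direct computation: for each of the four pairs $(\ell, i) \in \{2,3\} \times \{1,2\}$, form the matrix whose columns are the vectorizations of $M^{-1} X_i M$ as $M$ ranges over $\Sp_4(\bz/\ell\ZZ)$, and check that it has rank $10$. This can be carried out in \texttt{Magma} in a few lines, in the same spirit as the base-case verification in Lemma~\ref{lemma:base-case-low-lifting}; alternatively one can exhibit by hand ten explicit conjugators $M_1,\dots,M_{10}$ whose outputs are linearly independent.

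The reason both $\ol{\matone}$ and $\ol{\mattwo}$ must be treated — and the reason I expect the verification itself to be the main content of the proof — is that Lemma~\ref{lemma:particular-matrices-in-preimage} only guarantees that \emph{one} of the two lies in $H(\ell^2)$, without specifying which. Moreover, one cannot substitute a clean representation-theoretic shortcut here: in characteristic $2$ the adjoint representation of $\Sp_4$ on $\mf{sp}_4$ fails to be irreducible, so there is no general principle guaranteeing that an arbitrary nonzero element of $\mf{sp}_4(\bz/2\ZZ)$ generates it as a module over the group algebra. The specific shapes of $\ol{\matone}$ and $\ol{\mattwo}$ were chosen in Lemma~\ref{lemma:particular-matrices-in-preimage} precisely so that each of the associated $X_i$ has an orbit of full rank, and confirming this requires inspecting the actual span, for which the finite computation is the most direct and least error-prone route.
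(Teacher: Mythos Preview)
Your proposal is correct and matches the paper's approach exactly: the paper's proof is simply a block of \texttt{Magma} code that computes the subgroup generated by the $\Sp_4(\bz/\ell\ZZ)$-conjugates of $\ol{\matone}$ and $\ol{\mattwo}$ inside $\Sp_4(\bz/\ell^2\ZZ)$ and checks that it equals the full kernel $1+\ell\cdot\mf{sp}_4(\bz/\ell\ZZ)$ for $\ell\in\{2,3\}$. Your reformulation in terms of the adjoint action and your remark on the failure of irreducibility in characteristic~$2$ add useful context, but the verification itself is the same finite computation.
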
 
	\begin{proof}
    The following {\tt Magma} code verifies that each of the sets defined in the lemma statement span $\mf{sp}_{2g}(\bz / \ell \ZZ)$.
\vspace*{0.1in}
\begin{adjustwidth}{0.5in}{0in}
\begin{alltt}
for l in [2, 3] do
    Z := Integers();
    G := GL(4,quo<Z|l*l>);
    A := elt<G| 1,0,0,0, 1,1,0,0, 0,0,1,0, 0,0,0,1>;
    B := elt<G| 1,1,0,0, 0,1,1,0, 1,1,1,1, 1,1,0,1>;
    H := sub<G|A,B>;
    grp, f := ChangeRing(G, quo<Z|l>);
    Lie := Kernel(f) meet H;
    M := elt<G| 1-l,-l,0,0, 0,1+l,0,0, 0,0,1,0, 0,0,0,1>;
    N := elt<H| 1-l,-l,l,0, 0,1 + l,0,0, 0,0,1,0, 0,-l,0,1>;
    #sub<H|Conjugates(H,M)> eq #Lie;
    #sub<H|Conjugates(H,N)> eq #Lie;
end for;
	\end{alltt}
   \end{adjustwidth}
\vspace*{0.1in}
This concludes the proof of Lemma~\ref{lemma:code}
	\end{proof} 
    
We now have the tools to prove Proposition~\ref{proposition:lifting-2}.
\begin{proof}[Proof of Proposition~\ref{proposition:lifting-2}]
	The base case $g = 2$ is the content of Lemma~\ref{lemma:base-case-low-lifting}.
Now take $g \ge 3$, and suppose the result holds for $g - 1$.	
We shall now use the inductive hypothesis to show that $\ker \phi_{\ell,2g} \subset H(\ell^2)$, which would imply that $H(\ell^2) = \Sp_{2g}(\bz / \ell^2 \ZZ)$. 
		Since $H(\ell^2)$ surjects onto $\Sp_{2g}(\bz / \ell \ZZ)$, we have that the group $H(\ell^2) \cap \phi_{\ell,2g}^{-1}(\Sp_{2g-2}(\bz / \ell \ZZ))$ surjects onto $\Sp_{2g-2}(\bz / \ell \ZZ)$, and therefore so does the group
		\(
			\pi(H(\ell^2) \cap \phi_{\ell,2g}^{-1}(\Sp_{2g-2}(\bz / \ell \ZZ))). 
		\)
		By the inductive hypothesis, 
				\(
			\pi(H(\ell^2) \cap \phi_{\ell,2g}^{-1}(\Sp_{2g-2}(\bz / \ell \ZZ))) =\Sp_{2g-2}(\bz / \ell^2 \ZZ), 
		\)
		so upon applying Lemma~\ref{lemma:particular-matrices-in-preimage}, we deduce that either $\matone$ or $\mattwo$ lies in
$H(\ell^2)$.
Now, conjugating by matrices of the form 
		\[
			\left[\begin{array}{c|c}M_{4 \times 4} & 0_{4 \times (2g-4)} \\ \hline 0_{(2g-4) \times 4} & \id_{2g-4} \end{array}\right]\pmod{\ell}
		\]
where $M_{4 \times 4} \in \Sp_{4}(\bz / \ell \ZZ)$ serves to conjugate the upper-left $4 \times 4$ of $\matone$ and $\mattwo$ by $M_{4 \times 4}$. 
By Lemma~\ref{lemma:code}, we have that $\mf{sp}_{4}(\mathbb Z/\ell^2 \ZZ) \subset H(\ell^2)$, embedded as
		\begin{equation}\label{abovesubspace}
			\id_{2g} + \ell \cdot 
			\left[\begin{array}{c|c} 
			\mf{sp}_4(\bz / \ell \ZZ) & 0_{4 \times (2g-4)} \\ \hline
			0_{(2g-4) \times 4} & 0_{(2g-4) \times (2g-4)}
			\end{array}\right]. 
		\end{equation}
Construct $Q \in \Sp_{2g}(\bz / \ell \ZZ)$ by taking any $g \times g$ permutation matrix and replacing each $1$ with an $\id_2$-block. Conjugating the subspace in~\eqref{abovesubspace} by various such $Q$ shows that $H(\ell^2)$ contains $\ker \phi_{\ell,2g}$.
This can be seen by a straightforward argument involving
choosing a basis for $\mf{sp}_{2g}(\mathbb Z/\ell \ZZ)$
whose elements are elementary matrices or sums
of \mbox{two elementary matrices.}
\end{proof} 
	
\subsection{Finishing the Proof}
    We are now in position to complete the proof of Theorem~\ref{jamesdreamcometrue}

	\begin{proof}[Proof of Theorem~\ref{jamesdreamcometrue}]
		We split into three cases: 
		\begin{itemize} 
			\item Suppose $\ell \ge 5$. Then Lemma~\ref{proposition:lifting-l} implies $H(\ell^2) = \Sp_{2g}(\bz / \ell^2 \ZZ)$. 
			\item Suppose $\ell = 3$. Then Lemma~\ref{proposition:lifting-2} implies $H(9) = \Sp_{2g}(\bz / 9 \ZZ)$. 
			\item Suppose $\ell = 2$. Then Lemma~\ref{proposition:lifting-2} implies $H(4) = \Sp_{2g}(\bz / 4 \ZZ)$, and Lemma~\ref{proposition:lifting-4} implies $H(8) = \Sp_{2g}(\bz / 8 \ZZ)$. 
		\end{itemize} 
	Combining the above results with Lemma~\ref{proposition:lifting-l-squared} gives the desired conclusion.
	\end{proof}

\section*{Acknowledgments} 	

\noindent This research was supervised by Ken Ono and David Zureick-Brown at the Emory University Mathematics REU and was supported by the National Science Foundation (grant number DMS-1557960). We would like to thank David Zureick-Brown for suggesting the problem that led to the present article and for offering us his invaluable advice and guidance. We would also like to thank David Zureick-Brown for providing the intuition behind the proof of Lemma~\ref{proposition:lifting-l}. We would like to thank Jackson Morrow, Ken Ono, and David Zureick-Brown for making several helpful comments regarding the composition of this article. We would like to acknowledge Michael Aschbacher and Nick Gill for their helpful advice. We used {\tt Magma} and \emph{Mathematica} for explicit calculations.

\bibliographystyle{alpha}
\bibliography{bibfile}

\end{document}